\newcommand\inv[1]{#1\raisebox{1.15ex}{$\scriptscriptstyle-\!1$}}
\newtheorem{coro}{Corollary}
\newtheorem{defn}{Definition}
\newtheorem{ex}{Example}
\begin{document}
\title{On maximum distance separable group codes with complementary duals}


\author{Saikat Roy         \and
        Satya Bagchi 
}


\institute{S. Roy \at
                Junior Research Fellow, CSIR\\
                Department of Mathematics\\ 
                National Institute of Technology Durgapur\\
                Burdwan, India. \\
               \email{saikatroy.cu@gmail.com}           
           \and
           S. Bagchi \at
                Department of Mathematics\\ 
                National Institute of Technology Durgapur\\
                Burdwan, India. \\
               \email{satya.bagchi@maths.nitdgp.ac.in}           
}

\date{Received: date / Accepted: date}

\maketitle

\begin{abstract}
Given an LCD group code $C$ in a group algebra $KG$, we inspect kinship between $C$ and $G$, more precisely between the subgroup structures of $G$ and $C$. Under some special circumstances our inspection provides an estimation for various parameters of a group code $C$. When $C$ is MDS, the inter relation between $K$ and $G$ becomes more impressive. Application of Sylow theorem facilitated us to explore the inter relation between $G$ and $K$ (when $C$ is LCD and MDS) in more general way and finally we get the result of Cruz and Willems (Lemma $4.4$) as a simple sequel.

\keywords{Group code \and LCD code \and MDS code \and Sylow subgroup}
\subclass{94B05 \and 20D20}
\end{abstract}

\section{Introduction}
A group algebra \cite{DummitFoote} \cite{MacWilliamsSloane} composed of two algebraic objects, a field and a finite group. For $K$ being a field and $G$ being a finite group, the group algebra is denoted by $KG$ and defined by the collection of formal sums $$\lbrace \sum_{g\in G}a_gg:a_g\in K \rbrace.$$

Under the binary operations ``$+$'' and ``$\cdot$'' defined by $\forall a, b\in KG$, $a=\sum_{g\in G}a_gg$, $b=\sum_{g\in G}b_gg$; $a_g, b_g\in K$; $$ a+b=\sum_{g\in G}(a_g+b_g)g ~~\mathrm{and} ~~a.b=\sum_{g\in G}(\sum a_gb_g)g$$ respectively, with $1.g=g,~ 0.g=0,~\forall g\in G$, $(1$ and $0$ being multiplicative identity and additive identity in the field $K$ respectively) and $a.id=a,~\forall a\in K$, where $id$ is the identity element in the group $G$,
$(KG, +, \cdot)$ forms a vector space over $K$ (with the elements of $G$ as a basis) as well as a ring containing $K$ and $G$. $KG$ becomes commutative if $G$ is commutative. Since $K$ commutes with every member of $KG$ so $K \subseteq C(KG)$, where $C(KG)$ is the centre of $KG$.


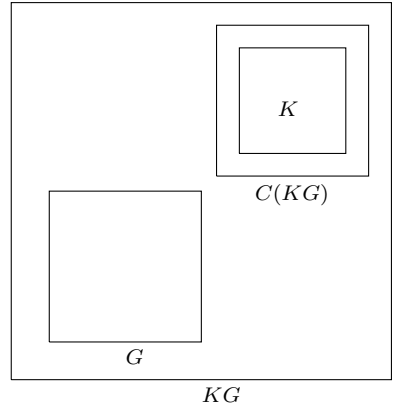
\begin{figure}[ht]\label{p1_f1}
\begin{center}
\begin{tikzpicture}

\draw (0,0) -- (5,0) -- (5,5) -- (0,5) -- (0,0);
\draw (0.5,0.5) -- (2.5,0.5) -- (2.5,2.5) -- (0.5,2.5) -- (0.5,0.5);
\draw (2.7,2.7) -- (4.7,2.7) -- (4.7,4.7) -- (2.7,4.7) -- (2.7,2.7);
\draw (3,3) -- (4.4,3) -- (4.4,4.4) -- (3,4.4) -- (3,3);
\draw (2.4,0)   node[anchor=north west] {$KG$};
\draw (1.4,0.5) node[anchor=north west] {$G$};
\draw (3.1,2.7) node[anchor=north west] {$C(KG)$};
\draw (3.4,3.8) node[anchor=north west] {$K$};
\end{tikzpicture}

\caption{Group algebra $KG$}
\end{center}
\end{figure}

Since $K \subseteq C(KG)$, $KG$ sometimes called $k$-algebra. The structure of the group algebra $KG$ can be visualized (intuitively) from Figure \ref{p1_f1}. The dimension of the vector space $KG$ over $K$ is the cardinality of $G$.   A group code $C$ is a right ideal of $KG$ with the length $= |G|$ and weight function $wt(a)= |\{g\in G ~| a_g \neq 0\}|$.

Since $KG$ is a vector space over $K$, there is no harm to talk about the inner product in $KG$. The inner product in $KG$ is defined in a similar way the dot product is defined in Euclidean space. The inner product of $a, b \in KG$, denoted as $\langle a,b\rangle$, is defined by $\langle a,b\rangle =\sum a_gb_g.$ The basic difference between Euclidean dot product and the inner product in $KG$ is, $||a||$ can become zero without ``$a$'' being zero in $KG$. $||a||=0$ without $a$ being zero in $KG$ happens iff characteristic of $K$ divides $\sum a_g^2$.  Also for any $g\in G$, $\langle ag,bg\rangle =\langle a,b\rangle$ and hence this inner product is called $G$-invariant. Using this we can characterize the dual of $C$ as $`` b\in C^\perp $ iff $\langle a,b\rangle =0,~ \forall a\in C."$ 

A group code $C$ is called linear complementary dual (LCD) \cite{Massey} iff $C\cap C^\perp=\lbrace 0 \rbrace.$ LCD group codes are asymptotically good, satisfies Gilber-Varshamov bound and has significant applications in modernistic information theory \cite{CruzWolfgang}. Mathematically the LCD group codes are more sophisticated than just ordinary linear codes. Where an ordinary linear code involves the concept of vector spaces only, the group code involves the concept of rings, fields, ideals, modules and vector spaces.

In this paper, the ideals we are considering are right ideals, if nothing else mentioned. By $KG$, we mean the $k$-algebra, where $G$ is a group of order $n$ and $K$ is a finite field. We denote idempotents in $KG$ as $z$. By ``*" denotes the binary operation in the group.

\begin{defn}
Let $a\in KG$ with $a=\sum_{g\in G}a_gg,a_g\in K$, the adjoint of $a$, $\hat a$ is defined by $\sum_{g\in G}a_g\inv{g}$.
\end{defn}

\begin{defn}
A group code $C$ is called self adjoint if $c\in C$ implies $\hat c\in C$.
\end{defn}

Instead of a field, if $K$ happens to be a ring, $KG$ mere forms a module over $K$. A module called projective iff it is a direct summand of a free module.

\begin{defn}
$z\in KG$ is called an idempotent if $z^2=z$, and if $z$ belongs to the centre of $KG$ then $z$ is called a central idempotent.
\end{defn}

\begin{defn}
A $q$-ary linear code $C$ having length $n$, dimension $k$, minimum distance $d$, symbolically $[n,k,d]_q,$ is called maximum distance separable (MDS) code, if $d=n-k+1$. For a particular code $C$, we denote this minimum distance of $C$ by $d(C)$.
\end{defn}

If $C$ is an LCD group code in a group algebra $KG$ then $C$ and $C^{\perp}$ can be expressed as $zKG$ and $(1-z)KG$ respectively, where $z=z^2=\hat{z} \in KG$. In addition the LCD group code becomes self adjoint iff $z\in C(KG)$. Algebraic structure that an LCD group code $C$ carries is very much dependent on the structures of underlying field and group of the group algebra.

This motivates us to study the properties of the underlying group subject to a group code. While the result given by Cruz and Willems in \cite{CruzWolfgang} is very much decisive for the commutative groups it neither affirms nor negates the same conclusion for non-commutative groups. Application of Sylow theorems ensures something more regarding an LCD, MDS group code.

Given $G$ be any group having order $p^nm$, with $gcd(p,m)=1$, and $p$ being a prime. Then the Sylow's theorems \cite{DummitFoote} conclude the following. 
\begin{enumerate}
\item There exists a subgroup of $G$ having order $p^n$ called Sylow-$p$-subgroup of $G$.
\item Any two Sylow-$p$-subgroup of $G$ are conjugate.
\item Number of Sylow-$p$-subgroups of $G$ is $n_p\equiv 1\pmod p$ and $n_p\mid |G|$.
\end{enumerate}

Furthermore, it is easy to see that the unique Sylow subgroups are normal and converse. In addition, for a commutative group $G$, all Sylow subgroups are normal, hence unique.

In \cite{Willems} Willems completely classified the group algebras which contains a self dual ideal, also in \cite{CruzWolfgang} Willems established that ``For $G$ being a finite abelian group and $K$ being a field of characteristic $p$ with $KG$ contains an LCD group code $C$. If $C$ is MDS, then $p\nmid |G|$." Here we have proved the same result for a larger class of groups. As we go a bit further, we see that Sylvester's Rank-Nullity theorem \cite{HoffmanKunze} provides us an estimation about the distance of an LCD group code. For $T$ being a linear operator from a finite dimensional vector space $V$ (over some field $K$), Sylvester's Rank-Nullity theorem ensures the following fact
$dimV=dim(\mathrm{Range~of} ~T)+dim(\mathrm{Kernel~of}~T)$, i.e., $$dimV=rank(T)+nullity(T).$$

The remaining portion of this paper consists of main results, given in Section $2$ and some relevant examples (in context of our results) in Section $3$. We close our paper by drawing some conclusions in Section $4$.


\section{Results} 
Throughout this section the cardinality of a set $X$ is denoted as $|X|$ and $dim(KG)$ denotes the dimension of the vector space $KG$ over the field $K$.  By ``the coefficients of any element $a\in KG$" we mean $a_g\in K$ and by the components of $a$ we mean different $g\in G$ associated to the different non zero $a_g$'s.

\textbf{Alert!!!}
 In next two theorems we should note that the statements ``components of $a\in KG$ with identity \textbf{forms} a subgroup of $G$" and ``components of $a\in KG$ \textbf{generates} a subgroup of $G$" are not the same. While ``the components of $a\in KG$ with identity forms a subgroup of $G$" means when we consider the components of $a$ as a set together with identity, the set remains closed and associative under the binary operation of $G$. The set is closed under inverse, i.e., the set together with identity qualifies to be a subgroup of $G$, ``the components of $a\in KG$ generate subgroup of $G$" means the subgroup in $G$ generated by the components of $a$. As for example let $G=S_3$ and $K$ be any field, if we set $a=(12)$ then component of $a$ with unity is already a subgroup of $S_3$ but if we set $a=()+(123)$, the components of $a$ is not a subgroup of $S_3$, and the subgroup generated by the components of $a$ is $\langle (123) \rangle$.

\begin{theorem}\label{p1_t1}
Suppose $z\in KG$ is an element such that the non-zero coefficients of $z$ is $\lambda$ for some fixed $\lambda \in K$, if components of $z$ with identity (we include identity separately if it do not appears in the components of $z$) form a normal subgroup of $G$ . Then $z\in C(KG)$.
\end{theorem}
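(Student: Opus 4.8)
The plan is to use the standard characterisation that an element of $KG$ lies in $C(KG)$ exactly when it is fixed under conjugation by every group element, together with the observation that conjugation merely permutes the components of $z$ once these components are known to generate a normal subgroup. First I would record the reduction: by bilinearity of the product in $KG$, the central element $z$ commutes with every $a \in KG$ as soon as it commutes with each basis element, so it suffices to prove $zg = gz$ for all $g \in G$, equivalently $g z \inv{g} = z$.

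Next I would write $z$ explicitly in terms of the hypothesised subgroup. Let $H$ denote the normal subgroup formed by the components of $z$ together with the identity, and let $H'$ be the actual set of components of $z$. Since every non-zero coefficient of $z$ equals the single fixed scalar $\lambda$, we have $z = \lambda \sum_{h \in H'} h$, where $H'$ is either $H$ itself (when the identity occurs as a genuine component) or $H \setminus \{id\}$ (when the identity had to be adjoined separately).

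The key step is the conjugation computation. For any $g \in G$, $g z \inv{g} = \lambda \sum_{h \in H'} g h \inv{g}$. Because $H$ is normal, the map $h \mapsto g h \inv{g}$ sends $H$ bijectively onto itself, and since conjugation always fixes $id$ it also sends $H \setminus \{id\}$ bijectively onto itself; in either case it permutes the set $H'$. As all surviving coefficients are the one scalar $\lambda$, this permutation leaves the sum unchanged, so $g z \inv{g} = \lambda \sum_{h \in H'} h = z$. Hence $z$ commutes with every $g \in G$, and therefore with every element of $KG$, which gives $z \in C(KG)$.

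I do not expect a genuine obstacle here; the only real subtlety is the bookkeeping around the identity. One must notice that conjugation fixes $id$, so that $H'$ is conjugation-invariant whether or not the identity is an actual component, and that it is the \emph{uniformity} of the coefficients which makes the argument close — a permutation of the components preserves $z$ only because every non-zero coefficient is the same $\lambda$, so no distinction between components can be disturbed.
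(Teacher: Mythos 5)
Your proposal is correct and follows essentially the same route as the paper: normality makes conjugation by any $g\in G$ permute the components of $z$, and the uniform coefficient $\lambda$ means this permutation fixes the sum, so $z$ commutes with every group element and hence with all of $KG$. Your explicit remark that conjugation fixes the identity (so the component set is conjugation-invariant whether or not the identity actually appears) is a small piece of bookkeeping the paper leaves implicit, but the argument is the same.
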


\begin{proof} 
Suppose $z=\lambda (h_1+h_2+\cdots +h_k) \in KG$. Given the components of $z$ along with identity (we include identity separately if it do not appears in the components of $z$) already qualifies to be a subgroup of $G$, call it $Z$. Now for each $g\in G$, we have $gZg^{-1}=Z$, since $Z\trianglelefteq G$ by our hypothesis. Now $g\cdot z=g\cdot \lambda (h_1+h_2+\cdots +h_k)=\lambda (h_1+h_2+\cdots +h_k)\cdot g,\forall g\in G,$ since $g\cdot h_i=h_j\cdot g$ for $1\leq i,j\leq k$ (for elements of $KG$ which comes from $G$ only ``$\cdot$'' and ``$\ast$'' coincides). Therefore $z$ commutes with every element of the group $G$, since the non zero coefficients of $z$ is $\lambda$, therefore it commutes with every element of $KG$. Hence $z\in C(KG)$, as desired.
\end{proof}

\begin{coro}
Because the inversion is a bijective map we get $z=\hat{z}$. In addition if such $z$ is idempotent in $KG$, we can construct self-adjoint, LCD group code $zKG$.
\end{coro}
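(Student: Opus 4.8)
The plan is to settle the corollary's two assertions in turn, leaning on Theorem~\ref{p1_t1} together with the dictionary recalled in the introduction: a code $zKG$ is LCD precisely when $z=z^2=\hat z$, and such an LCD code is self-adjoint precisely when $z\in C(KG)$ (the standard self-adjoint-idempotent characterization of LCD codes). Since the corollary speaks of ``such $z$'', all hypotheses of Theorem~\ref{p1_t1} are in force, in particular that the non-zero coefficients of $z$ are a single scalar $\lambda$ and that the components of $z$ together with the identity form a normal subgroup of $G$.

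First I would prove $z=\hat z$. Write $z=\lambda(h_1+h_2+\cdots+h_k)$ and let $Z$ be the subgroup formed by the components of $z$ adjoined with the identity; then $\hat z=\lambda(h_1^{-1}+h_2^{-1}+\cdots+h_k^{-1})$. Since $Z$ is a group, the inversion map $x\mapsto x^{-1}$ is a bijection of $Z$ fixing the identity, so it permutes the non-identity elements of $Z$ among themselves. Because every non-zero coefficient of $z$ is the one scalar $\lambda$, this permutation merely reorders the terms of the formal sum, whence $\hat z=z$. The sole bookkeeping point is the distinction flagged in the ``Alert'' remark, namely whether the identity already appears among the $h_i$ or is adjoined separately; in the latter case one uses that inversion sends non-identity elements to non-identity elements, so the set of components is preserved, and the conclusion $\hat z=z$ is unchanged.

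Assuming in addition that $z$ is idempotent, I would combine $z^2=z$ with $\hat z=z$ to get $z=z^2=\hat z$, so that $zKG$ is an LCD group code with dual $(1-z)KG$ by the characterization above. For self-adjointness, Theorem~\ref{p1_t1} applies directly to this $z$ and yields $z\in C(KG)$; the dictionary then promotes the LCD code $zKG$ to a self-adjoint one. Should I want this last step self-contained rather than quoted, I would invoke the anti-multiplicativity $\widehat{ab}=\hat b\,\hat a$ of the adjoint: for any $c=zx\in zKG$ one has $\hat c=\hat x\,\hat z=\hat x\,z=z\,\hat x\in zKG$, the second equality being $\hat z=z$ and the third the centrality of $z$.

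I expect no real obstacle here; the computation is short and every ingredient is already available. The one genuinely delicate spot is the identity-bookkeeping of the second paragraph, and beyond that the result is a direct appeal to Theorem~\ref{p1_t1} and the stated LCD/self-adjoint correspondence.
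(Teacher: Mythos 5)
Your proposal is correct and follows exactly the route the paper intends: the paper gives no separate proof for this corollary, embedding the entire justification in the phrase ``inversion is a bijective map,'' and your elaboration (inversion permutes the components of $z$ since they form a subgroup with the identity, all coefficients equal $\lambda$, hence $\hat z=z$; then $z=z^2=\hat z$ gives an LCD code and Theorem~1 gives $z\in C(KG)$, hence self-adjointness) is precisely the argument being gestured at. Your careful handling of whether the identity is adjoined or already present, and the explicit check $\hat c=\hat x\,z=z\,\hat x\in zKG$, supply details the paper omits but introduce no new ideas.
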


\begin{theorem}
Let $C=zKG$, where $z=z^2=\hat z$ be a self adjoint group code in $KG$, then the components of $z$ generate a normal subgroup in $G$.  
\end{theorem}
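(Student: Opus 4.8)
The plan is to reduce the statement to a claim about the support of a \emph{central} idempotent. Write $z = \sum_{s \in S} a_s\, s$, where $S = \{g \in G : a_g \neq 0\}$ is the set of components of $z$ and each $a_s \in K$ is nonzero, and set $H = \langle S \rangle$, the subgroup of $G$ generated by $S$. The goal is to show $gHg^{-1} = H$ for every $g \in G$. Since conjugation by $g$ is an automorphism of $G$ carrying a generating set of $H$ to a generating set of $gHg^{-1}$, it suffices to prove that $S$ itself is closed under conjugation, i.e. $gSg^{-1} = S$ for all $g \in G$.

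The first and only substantive step is to establish that $z$ lies in the centre $C(KG)$. Because $\hat z = z$ and the adjoint is an anti-automorphism, $\widehat{zKG} = KG\,\hat z = KGz$; the self-adjointness hypothesis $\widehat{zKG} = zKG$ then forces $zKG = KGz$, so that $C$ is in fact a two-sided ideal (this is also the content of the earlier remark that a self-adjoint LCD code $zKG$ has $z \in C(KG)$, which I could instead invoke directly). To deduce centrality, I would exploit that $z$ acts as a two-sided identity on this ideal: for any $g \in G$ we have $gz \in KGz = zKG$, whence $z(gz) = gz$, that is $zgz = gz$; symmetrically $zg \in zKG = KGz$ gives $(zg)z = zg$, that is $zgz = zg$. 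Comparing these yields $gz = zg$ for all $g \in G$, and since $G$ is a $K$-basis of $KG$ and multiplication is $K$-bilinear, $z$ then commutes with every element of $KG$. Hence $z \in C(KG)$.

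With centrality in hand the support closure is immediate. For each $g \in G$ we have $gzg^{-1} = z$, and expanding $gzg^{-1} = \sum_{s \in S} a_s\, (gsg^{-1})$ and comparing coefficients in the basis $G$ gives $a_{g^{-1}tg} = a_t$ for every $t \in G$ (with the convention $a_x = 0$ for $x \notin S$). In particular $t \in S \iff g^{-1}tg \in S$, so $gSg^{-1} = S$; thus $S$ is a union of conjugacy classes of $G$. Consequently $gHg^{-1} = \langle gSg^{-1}\rangle = \langle S \rangle = H$, and $H \trianglelefteq G$, as claimed.

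I expect the genuine content to sit entirely in the passage from self-adjointness to centrality of $z$; everything afterward is a routine coefficient comparison together with the elementary fact that a subgroup generated by a conjugation-invariant set is normal. The point I would be careful to respect is the distinction stressed in the ``Alert'': I am claiming normality of the \emph{generated} subgroup $\langle S \rangle$, not that $S$ together with the identity already forms a subgroup, and conjugation-invariance of $S$ is exactly what delivers the weaker, correct conclusion.
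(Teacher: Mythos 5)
Your proposal is correct, and its overall strategy coincides with the paper's: establish that $z$ is central, deduce that the support of $z$ is carried to itself by every conjugation, and conclude that the generated subgroup is normal. The worthwhile difference is in the first step. The paper simply asserts ``for $zKG$ being a self adjoint LCD group code, $z\in C(KG)$,'' leaning on an unproved remark from the introduction (and quietly importing an LCD hypothesis that the theorem statement does not contain), whereas you actually derive centrality from self-adjointness alone: $\widehat{zKG}=KG\hat z=KGz$, so self-adjointness forces $zKG=KGz$, and then $gz=z(gz)=zgz=(zg)z=zg$ because $z$ is a left identity on $zKG$ and a right identity on $KGz$. This closes a real gap and shows the LCD assumption is not needed. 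Your second half is also tidier than the paper's: comparing coefficients in $gzg^{-1}=z$ immediately gives $gSg^{-1}=S$, hence $g\langle S\rangle g^{-1}=\langle gSg^{-1}\rangle=\langle S\rangle$, which handles inverses and products in one stroke, while the paper's element-by-element matching of $z_i(g*h_i)$ with $z_j(h_j*g)$ only explicitly treats words in the generators. In short: same route, but your version supplies the missing lemma and states the normality step more cleanly.
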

\begin{proof}
We know that for $zKG$ being a self adjoint LCD group code, $z\in C(KG)$ and hence $z$ commutes with every member of $G$. Now for any member $g\in G$, we have $z\cdot g=g\cdot z$. Considering $G=\lbrace  g_1,g_2,\dots ,g_n\rbrace $ and components of $z$ as $\lbrace h_1, h_2, \cdots, h_k\rbrace $. Let $z=\sum_{i=1}^kz_ih_i,z_i\in K$. Since $z\cdot g=g\cdot z, \forall g\in G$, therefore, $z_i(g*h_i)=z_j(h_j*g),1\leq i,j\leq k$, i.e., $(z_i\inv{z_j})(g*h_i)=(h_j*g)$. Since $KG$ is a vector space over $K$, having the elements of $G$ as a basis, hence if $(g*h_i)$ and $(h_j*g)$ are distinct in $KG$ then $(g*h_i)$ and $(h_j*g)$ are linearly independent. Therefore $(z_i\inv{z_j})(g*h_i)=(h_j*g)$ holds if and only if $(z_i\inv{z_j})=1$ and $h_j*g=g*h_i$.

Now we consider $Z=\langle h_1,h_2,\dots ,h_k\rangle$, then $g*(h_{i_1}*\dots *h_{i_k}) = (h_{i_1}'*\dots *h_{i_k}')*g$ where $h_{i_1},\dots ,h_{i_k},h_{i_1}',\dots ,h_{i_k}'$ are among the components of $z$. Therefore $gZ=Zg,~\forall g\in G$, proving $Z\trianglelefteq G$. 
\end{proof}

Our next theorem ensures the existence of a self-adjoint LCD group code if the mother group $G$ has a normal subgroup of a particular order.

\begin{theorem}
Let $KG$ be a $k$-algebra with characteristic of $K$ is $p$, then for each subgroup $H\in G$ with $|H|\equiv 1\pmod p$, there exists an LCD group code in $KG$. Furthermore, if $H\trianglelefteq G$, then the group code becomes self-adjoint.
\end{theorem}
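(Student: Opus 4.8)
The plan is to attach to the subgroup $H$ the explicit element $e_H = \sum_{h \in H} h \in KG$, i.e. the sum of all group elements lying in $H$, and to show that $e_H$ is a self-adjoint idempotent whenever $|H|\equiv 1 \pmod p$. The first step is a multiplication computation in the group algebra: for each fixed $g \in H$ the number of factorizations $g = h\ast h'$ with $h,h' \in H$ equals $|H|$, so the convolution product gives $e_H^2 = |H|\, e_H$, where $|H|$ now denotes the image of the integer $|H|$ in $K$. Since $p = \mathrm{char}\,K$ and the hypothesis $|H|\equiv 1 \pmod p$ forces $|H| = 1$ in $K$, this collapses to $e_H^2 = e_H$, so $e_H$ is an idempotent.

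The second step is self-adjointness at the level of the element. Because $H$ is a subgroup it is closed under inversion, so as $h$ ranges over $H$ so does $\inv{h}$; hence $\hat{e}_H = \sum_{h \in H} \inv{h} = \sum_{h \in H} h = e_H$. Thus $e_H = e_H^2 = \hat{e}_H$, which is exactly the shape $z = z^2 = \hat z$ singled out in the excerpt as the generator of an LCD group code. Taking $C = e_H KG$ therefore produces an LCD group code, which settles the first assertion.

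For the furthermore clause I would show $e_H \in C(KG)$, since the excerpt records that an LCD group code $zKG$ is self-adjoint precisely when $z \in C(KG)$. The cleanest route is to invoke Theorem \ref{p1_t1}: the non-zero coefficients of $e_H$ are all equal to the single value $\lambda = 1$, and its components together with the identity are exactly the elements of $H$, which form a subgroup that is normal by hypothesis. Theorem \ref{p1_t1} then yields $e_H \in C(KG)$ at once, so $C = e_H KG$ is self-adjoint. (Equivalently, one checks by hand that $g\,e_H = e_H\,g$ for every $g \in G$, since $gH = Hg$ when $H \trianglelefteq G$.)

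The one point genuinely needing care is the passage from ``$z = z^2 = \hat z$'' to ``$zKG$ is LCD,'' which is the converse of the necessary condition stated in the excerpt. I would secure it through the identity $\langle a, b\rangle = (\hat a\, b)_{id}$, the coefficient of the identity in $\hat a\, b$, which follows immediately from the definitions of the $G$-invariant inner product and of the adjoint. This identity shows that $b \in (zKG)^{\perp}$ iff $\hat z\, b = 0$; with $\hat z = z$ and $z^2 = z$, any $a \in zKG$ satisfies $za = a$, whereas $a \in (zKG)^{\perp}$ forces $za = 0$, so $C \cap C^{\perp} = \{0\}$ and $C$ is LCD. Everything remaining is short and routine.
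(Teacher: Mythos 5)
Your proposal is correct and follows essentially the same route as the paper: both attach to $H$ the element $z=\sum_{h\in H}h$, compute $z^{2}=|H|\,z$ and use $|H|\equiv 1\pmod p$ to get an idempotent, obtain $\hat z=z$ from closure of $H$ under inversion, and invoke Theorem \ref{p1_t1} for the normal case. Your extra verification that $z=z^{2}=\hat z$ really forces $zKG\cap (zKG)^{\perp}=\{0\}$, via the identity $\langle a,b\rangle=(\hat a\, b)_{id}$, supplies a step the paper leaves implicit.
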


\begin{proof}
Let $H$ be a subgroup in $G$. Put $z=\sum_{h\in H}h$, then it is obvious that $z=\hat z$. To show $z$ is idempotent we formally calculate $z^{2}$, now $z^{2}=|H|\sum_{h\in H}h=(pk+1)\sum _{h\in H}h=\sum_{h\in H}h$, for some positive integer $k$.

If $H\trianglelefteq G$, then we set $z$ as above, then by Theorem \ref{p1_t1}, $z \in C(KG)$ and therefore $zKG$ is self-adjoint group code.
\end{proof}

\begin{coro} Let $G$ be a finite group and $K$ be a field with characteristic $p$. If $H\leq G$ such that $H$ is a characteristic subgroup of $G$ or unique Sylow subgroup of $G$ or $[G:H]=q$, where $q$ is the lowest prime divide the order of the group $G$ and $|H|=pk+1$ for some $k\in \mathbb{N}$. We can construct an LCD self-adjoint group code with the help of $H$.
\end{coro}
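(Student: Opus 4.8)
The plan is to read this corollary as a direct application of the immediately preceding theorem, whose hypotheses require only $|H|\equiv 1\pmod p$ together with $H\trianglelefteq G$. Since $|H|=pk+1$ is exactly the condition $|H|\equiv 1\pmod p$, the whole content reduces to checking that each of the three stated circumstances forces $H$ to be normal in $G$. Once normality is secured, the theorem supplies the self-adjoint LCD group code $zKG$ with $z=\sum_{h\in H}h$ at no further cost, so I would state that reduction at the outset and then treat the three cases in turn.

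First I would dispatch the two routine cases. If $H$ is a characteristic subgroup, then $H$ is fixed by every automorphism of $G$, in particular by every inner automorphism $x\mapsto gxg^{-1}$, so $gHg^{-1}=H$ for all $g\in G$ and hence $H\trianglelefteq G$. If $H$ is the unique Sylow subgroup for some prime, normality has already been recorded in the preliminaries: by the second Sylow theorem all Sylow subgroups for a fixed prime are conjugate, so a unique one must satisfy $gHg^{-1}=H$ for every $g$.

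The substantive case is $[G:H]=q$, with $q$ the smallest prime dividing $|G|$. Here I would let $G$ act on the left cosets $G/H$ by left multiplication, producing a homomorphism $\phi\colon G\to S_q$ whose kernel $N$ is contained in $H$, since the stabilizer of the trivial coset is precisely $H$. Then $G/N$ embeds in $S_q$, so $[G:N]$ divides $q!$; writing $[G:N]=[G:H]\,[H:N]=q\,[H:N]$ forces $[H:N]$ to divide $(q-1)!$. But $[H:N]$ also divides $|G|$, every prime factor of which is at least $q$, whereas every prime factor of $(q-1)!$ is strictly less than $q$. The only reconciliation is $[H:N]=1$, whence $H=N\trianglelefteq G$.

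With normality established in all three cases and $|H|\equiv 1\pmod p$ given by hypothesis, the preceding theorem applies verbatim to produce a self-adjoint LCD group code built from $H$. I expect the coset-action computation of the third case to be the only step demanding genuine argument; the characteristic and unique-Sylow criteria follow immediately from definitions already recalled in the text, so the main obstacle is simply marshalling the divisibility bound $[H:N]\mid(q-1)!$ against the fact that all prime divisors of $|G|$ are at least $q$.
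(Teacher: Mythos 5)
Your proposal is correct and follows the same route as the paper: reduce the corollary to verifying that each of the three hypotheses forces $H\trianglelefteq G$, then invoke the preceding theorem with $z=\sum_{h\in H}h$. The paper simply asserts the normality without argument, whereas you supply the standard justifications (characteristic subgroups are normal, a unique Sylow subgroup is normal by conjugacy, and the coset-action/divisibility argument for index equal to the smallest prime), all of which are sound.
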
 
\begin{proof}
If $H$ satisfies any of three conditions stated above $H$ qualifies to be a normal subgroup of $G$ having order $pk+1$ for some $k\in \mathbb{N}.$ Then proceeding in exactly same manner described in Theorem 3, we can form a self-adjoint LCD group code in $KG$.
\end{proof}

\begin{theorem}
Let $(G,*)$ be a group of order $p^n m$, with $gcd(p,m)=1$ and $K$ be a field of characteristic $p$.
\begin{enumerate}
\item For $m=1$, $KG$ do not contains a proper LCD code.
\item Let $C=zKG$, where $z=z^2=\hat z$, be an LCD group code then the components of $z$ is a subset of $\lbrace\cup$\{Sylow-$p$-subgroups\}$\setminus\lbrace id\rbrace  \rbrace^c$, complement of $\lbrace\cup$\{Sylow-$p$-subgroups\}$\setminus\lbrace id\rbrace  \rbrace$.
\end{enumerate}
\end{theorem}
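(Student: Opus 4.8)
The plan is to prove part (2) as the substantive statement and to settle part (1) on its own, since (1) also falls out as the degenerate case of (2) (when $m=1$ the support set of any LCD generator is forced down to $\{id\}$). For (1) directly: when $m=1$ the group $G$ is a $p$-group, so I would invoke the classical fact that its augmentation ideal $I=\ker\varepsilon$, where $\varepsilon\colon KG\to K$ is the augmentation $\sum_g a_g g\mapsto\sum_g a_g$, is nilpotent. One sees this by induction on $|G|$: a nontrivial $p$-group has a central element $c$ of order $p$, the element $c-1$ is central with $(c-1)^p=c^p-1=0$, and $I(G)$ is an extension of $I(G/\langle c\rangle)$ by the nilpotent ideal $(c-1)KG$. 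Then $KG/I\cong K$ makes $KG$ local, so its only idempotents are $0$ and $1$; since every LCD code is $zKG$ with $z=z^2$, the only LCD codes are $\{0\}$ and $KG$, and none are proper.

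For (2), the engine is that an idempotent satisfies $z=z^{p^{N}}$ for every $N\ge 1$, combined with the characteristic-$p$ Frobenius congruence $(a_{1}+\cdots+a_{r})^{p}\equiv a_{1}^{p}+\cdots+a_{r}^{p}\pmod{[KG,KG]}$, valid in any associative algebra of characteristic $p$, where $[KG,KG]$ denotes the $K$-span of the commutators $xy-yx$. The $p$-power map therefore descends to an additive, $p$-semilinear map on $KG/[KG,KG]$ sending the class of $g$ to the class of $g^{p}$; since that quotient has the conjugacy-class sums as a $K$-basis and $z=z^{p^{N}}$, I obtain, modulo $[KG,KG]$, the identity $\bar z=\sum_{g}z_{g}^{\,p^{N}}\,\overline{g^{p^{N}}}$. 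Choosing $N$ with $p^{N}\equiv 1$ modulo the exponent of the $p'$-part of $G$ replaces each $g^{p^{N}}$ by its $p$-regular part, so $\bar z$ is supported on $p$-regular classes; equivalently, the class sum $\sum_{g\in\mathcal C}z_{g}$ vanishes for every $p$-singular class $\mathcal C$. This is precisely where the Sylow theorems enter as the paper advertises: because all Sylow $p$-subgroups are conjugate, the set $\bigcup\{\text{Sylow-}p\text{-subgroups}\}\setminus\{id\}$ of non-identity $p$-elements is conjugation-invariant, hence a union of conjugacy classes, each of them $p$-singular, so all the corresponding class sums of $z$ are zero.

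The remaining and hardest step is to pass from the vanishing of these class sums to the vanishing of each individual coefficient $z_{g}$ at a non-identity $p$-element $g$, which is what the statement literally asserts. Here I would bring in the hypothesis $\hat z=z$, which forces $z_{g}=z_{g^{-1}}$ and pairs each class with the class of its inverses, together with the coefficient form of $z^{2}=z$, namely $z_{x}=\sum_{u\in G}z_{u}z_{ux}$ for all $x$ (obtained by substituting $z_{g}=z_{g^{-1}}$ into $(z^{2})_{x}=\sum_{g}z_{g}z_{g^{-1}x}$). For an inversion-symmetric class with few elements, such as $\{g_{0},g_{0}^{-1}\}$, the class-sum relation and $z_{g_{0}}=z_{g_{0}^{-1}}$ collapse at once to $z_{g_{0}}=0$, so those cases are immediate. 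The genuine obstacle is a class of $p$-elements on which class-sum vanishing plus self-adjointness does not by itself force the individual coefficients to vanish; there the quadratic relation above, or equivalently the decomposition $KG=zKG\oplus(1-z)KG$ with $C^{\perp}=(1-z)KG$, must be exploited more finely. My intended device is to localize at the cyclic $p$-group $Q=\langle g_{0}\rangle$ and play the nilpotent $\sigma_{Q}=\sum_{h\in Q}h$ (which satisfies $\sigma_{Q}^{2}=0$ and $\hat\sigma_{Q}=\sigma_{Q}$) against the non-degeneracy of the $G$-invariant inner product on the LCD decomposition; making this final descent work for an arbitrary class of $p$-elements is the crux of the whole argument.
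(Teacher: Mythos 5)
Your argument for part (1) is complete, correct, and genuinely different from the paper's: the paper simply asserts that $z^{p^n}=\lambda\cdot id$, which amounts to applying the Frobenius identity $(\sum_g z_g g)^{p^n}=\sum_g z_g^{p^n}g^{p^n}$ termwise and is only literally valid when $G$ is abelian, whereas your route through the nilpotency of the augmentation ideal shows that $KG$ is a local ring, hence has no idempotents besides $0$ and $1$, for an arbitrary $p$-group. That is a cleaner and strictly more general argument than the one in the paper.

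For part (2) there is a genuine gap, and you have named it yourself. Working modulo $[KG,KG]$ you correctly obtain only that the class sum $\sum_{g\in\mathcal C}z_g$ vanishes for every $p$-singular conjugacy class $\mathcal C$; the theorem asserts the vanishing of each individual coefficient $z_g$, and the passage from the first statement to the second is exactly the step you leave open (your $\sigma_Q$ device is never carried out, and the two-element-class case you do settle needs $p$ odd). You should know that the paper's own proof does not close this gap either: it argues in one line that a $p$-element in the support of $z$ would force $z^{p^n}\neq z$, which again tacitly uses the termwise power formula and so is only a proof for abelian $G$, where classes are singletons and class-sum vanishing coincides with coefficient vanishing. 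So your proposal is faithful to the paper's underlying idea --- iterate $z=z^{p^N}$ and exploit that $p^N$-th powers of group elements are $p$-regular --- and is in fact more rigorous up to the point where it stops; but as written neither your argument nor the paper's establishes part (2) for noncommutative $G$. To finish, you must either complete the descent from vanishing class sums to vanishing coefficients or restrict the claim to the abelian (or, more generally, $p$-singular-classes-of-size-at-most-two with $p$ odd) setting.
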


\begin{proof}
1. Suppose $KG$ contains a proper LCD code $C$, then $C=zKG$, where $z=z^2=\hat z$, If identity is the only component of $z$ then $C=KG$, if there are some non trivial components in $z$ then $z^{p^n}= \lambda \cdot id$, for some scaler $\lambda \in K$; but according to our hypothesis $z=z^2=\cdots =z^{p^n}=z$, hence contradiction, so no such code exists.\\

2. For any $g\in$ Sylow-p-subgroup $P$ of $G$, $g^{p^n}=id$. So if some $g\in P$ appears in the components of $z$, we get $z^{p^n} \neq z$. So contradiction. Hence proved.
\end{proof}

\begin{theorem}
Let $G$ be a finite group and $K$ is a field of characteristic $p$ and $p\mid |G|.$ Suppose $KG$ contains an LCD and MDS group code $C$. Then $d(C)\geq p^m+1$, where $p^m$ is the order of Sylow-p-subgroup in $G$.
\end{theorem}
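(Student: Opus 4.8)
The plan is to show that the hypotheses force $p^m$ to divide $\dim_K C$, and then to read the distance bound off the MDS equality $d(C)=|G|-\dim_K C+1$. Fix a Sylow-$p$-subgroup $P$ of $G$, so $|P|=p^m$ and $|G|=p^m\,[G:P]$. Since $C$ is LCD I may write $C=zKG$ and $C^\perp=(1-z)KG$ with $z=z^2=\hat z$, so that $KG=C\oplus C^\perp$ as right $KG$-modules.

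First I would regard everything as a module over the subalgebra $KP\subseteq KG$. Decomposing $G$ into its left cosets $g_1P,\dots ,g_rP$ (with $r=[G:P]$), each subspace spanned by a single coset $g_iP$ is a free rank-one right $KP$-module under right multiplication, since $(g_i h)h'=g_i(h h')\in g_iP$ for $h,h'\in P$. Hence $KG\cong (KP)^{r}$ is a free right $KP$-module. Restricting the decomposition $KG=C\oplus C^\perp$ to the $KP$-action then exhibits $C$ as a direct summand of a free $KP$-module, i.e. $C$ is a finitely generated projective right $KP$-module.

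The decisive step is that $KP$ is a local ring: because $\mathrm{char}\,K=p$ and $P$ is a $p$-group, the augmentation ideal of $KP$ is nilpotent and coincides with the Jacobson radical $J(KP)$, with $KP/J(KP)\cong K$. Over a local ring every finitely generated projective module is free, so $C\cong (KP)^{t}$ for some integer $t$, and therefore $\dim_K C=t\,|P|=t\,p^m$; that is, $p^m\mid \dim_K C$. Since $C$ is a proper nonzero code we have $0<\dim_K C<|G|=r\,p^m$, which forces $t\le r-1$ and hence $\dim_K C\le |G|-p^m$.

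Finally, invoking that $C$ is MDS gives $d(C)=|G|-\dim_K C+1\ge |G|-(|G|-p^m)+1=p^m+1$, as required. I expect the main obstacle to be the module-theoretic core, namely verifying cleanly that $C$ is projective over $KP$ and that projective modules over the local ring $KP$ are free, so that the divisibility $p^m\mid \dim_K C$ is fully rigorous; once this structural fact is in place, the distance estimate follows immediately from the MDS equality.
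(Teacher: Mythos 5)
Your proposal is correct, but it reaches the bound by a genuinely different route than the paper. The paper's proof stays entirely at the level of the idempotent: it invokes its Theorem~4(2) (the components of $z$ avoid every non-identity element of every Sylow-$p$-subgroup, because $z^{p^{n}}=z$ while $p$-elements satisfy $g^{p^{n}}=id$) to get $wt(1-z)\leq n-p^{m}+1$, hence $d(C^{\perp})\leq n-p^{m}+1$; the MDS equality $d(C^{\perp})=\dim(C)+1$ then yields $\dim(C)\leq n-p^{m}$, equivalently $\dim(C^{\perp})\geq p^{m}$, and $d(C)=\dim(C^{\perp})+1\geq p^{m}+1$. You instead prove the representation-theoretic fact that $C=zKG$, being a direct summand of $KG$ and hence of the free right $KP$-module $KG\cong (KP)^{[G:P]}$, is projective and therefore free over the local ring $KP$, so that $p^{m}\mid\dim_{K}C$; this is Dickson's classical divisibility theorem for projective modules in characteristic $p$, and it is strictly stronger than what the paper extracts (it needs only that $C$ is LCD, not MDS, and it immediately re-proves the paper's Theorem~4(1) that a $p$-group admits no proper LCD code). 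The paper's argument is more elementary and, as its subsequent corollary notes, its support bound can be sharpened when $G$ has several Sylow-$p$-subgroups, whereas your divisibility statement is rigid. One small caveat common to both proofs: the statement as written does not exclude $C=KG$ (which is vacuously LCD and MDS with $d=1$), so your appeal to ``$C$ proper nonzero'' --- like the paper's implicit use of $1-z\neq 0$ --- is an unstated but necessary hypothesis; it is not a gap relative to the paper.
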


\begin{proof}
Since $C$ is an LCD and MDS code in $KG$ so $C$ is of the form $zKG$, where $z^2=\hat z=z$; Naturally, (by second part of Theorem $4$) $wt(z)\leq n-p^m+1$ and $wt(1-z)\leq n-p^m+1$. Now $d(C^{\perp})\leq n-p^m+1$.

$$d(C^{\perp})=n-dim(C^{\perp})+1 = dim(C)+1$$ $$ \therefore dim(zKG)+1\leq n-p^m+1 \Rightarrow dim(zKG)\leq n-p^m.$$ Since $dim(zKG)+dim((1-z)KG)=n,$ therefore, $dim((1-z)KG)\geq p^m.$ Because $d(C)=dim((1-z)KG)+1$, we get $d(C)\geq p^m+1.$ Similarly, $d(C^{\perp})\geq p^m+1.$
\end{proof}

\begin{coro}
The bounds of dimension $zKG$ and minimum distance can be improved once the number of Sylow-p-subgroup is determined. Since, the more Sylow-p-subgroup of $G$ implies the more elements in $G$  having order $p^r$ (for some integer $r$).
\end{coro}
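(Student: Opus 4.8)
The plan is to rerun the argument of Theorem $5$ verbatim, replacing the single quantity $p^m-1$ by the \emph{exact} number of forbidden components that the second part of Theorem $4$ supplies. Write $n=|G|$ and set
$$U=\{x\in G:\ x\neq id,\ x^{p^m}=id\},\qquad N=|U|.$$
An element satisfies $x^{p^m}=id$ exactly when its order is a power of $p$; every such element lies in some Sylow-$p$-subgroup, and conversely every element of a Sylow-$p$-subgroup satisfies $x^{p^m}=id$, so $U$ is precisely the union of all Sylow-$p$-subgroups with the identity removed. By Theorem $4(2)$ the components of $z$ avoid $U$, and since $1-z=(1-z)^2=\widehat{1-z}$ is again a self-adjoint idempotent with $(1-z)KG=C^{\perp}$ also LCD, the components of $1-z$ avoid $U$ as well. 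As the admissible components are then drawn from the $n-N$ group elements outside $U$ (the identity included), I obtain $wt(z)\le n-N$ and $wt(1-z)\le n-N$.

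With these two inequalities the remaining computation is identical to that of Theorem $5$. Since $1-z\in C^{\perp}$ is non-zero, $d(C^{\perp})\le wt(1-z)\le n-N$; the MDS hypothesis gives $d(C^{\perp})=n-dim(C^{\perp})+1=dim(zKG)+1$, whence $dim(zKG)\le n-N-1$. Combining this with $dim(zKG)+dim((1-z)KG)=n$ yields $dim((1-z)KG)\ge N+1$, and finally
$$d(C)=dim((1-z)KG)+1\ge N+2.$$
Taking $N=p^m-1$, the value forced when the Sylow-$p$-subgroup is unique, recovers exactly $d(C)\ge p^m+1$, so the estimate above is a genuine sharpening as soon as there is more than one Sylow-$p$-subgroup.

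It remains to tie $N$ to the number $n_p$ of Sylow-$p$-subgroups. If these subgroups meet only in the identity then $N=n_p(p^m-1)$; overlaps can only decrease this, but in every case $N\ge p^m-1$ with strict inequality once $n_p\ge 2$, so each extra Sylow-$p$-subgroup enlarges the pool of $p$-power-order elements, lowers the ceiling on $dim(zKG)$, and raises the floor on $d(C)$ — precisely the content of the corollary. A clean divisibility is also available: because $N+1$ counts the solutions of $x^{p^m}=id$, Frobenius's theorem forces $p^m\mid N+1$, so $N=tp^m-1$ for some integer $t\ge 1$ and the bound reads $d(C)\ge tp^m+1$, with $t=1$ exactly in the unique-Sylow case. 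The one real obstacle is that $N$ is \emph{not} determined by $n_p$ alone: its exact value hinges on how the Sylow-$p$-subgroups intersect, so the sharp improvement must be computed group by group rather than read off from $n_p$ directly.
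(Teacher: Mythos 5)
Your argument is correct and is exactly the refinement the paper has in mind: the corollary is stated there with no proof beyond a one-line heuristic, and your rerun of Theorem $5$ with the union of \emph{all} Sylow-$p$-subgroups in place of a single one --- giving $wt(z),wt(1-z)\le n-N$ and hence $d(C)\ge N+2$, where $N$ is the number of nontrivial elements of $p$-power order --- supplies precisely the missing details and correctly recovers $d(C)\ge p^m+1$ when $N=p^m-1$. Your two additions, the Frobenius-theorem observation that $p^m\mid N+1$ (so the bound reads $d(C)\ge tp^m+1$ with $t\ge 2$ as soon as $n_p\ge 2$) and the closing caveat that $N$ is governed by how the Sylow-$p$-subgroups intersect rather than by $n_p$ alone, are both correct and sharpen what the paper actually records.
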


\begin{theorem}
Suppose $G$ is a finite group and $K$ is a field of characteristic $p$. Let $C=zKG,$ where $z=z^2=\hat z$; be an LCD, MDS group code in $KG$. Define $S=\{g\in G: z\cdot g=scaler~multiple~of~z\}$.
 
1. $S$ is a subgroup of $G$. 

2. $d(C)\geq |S|$. 
\end{theorem}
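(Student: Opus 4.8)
The plan is to treat the two assertions separately, exploiting throughout the relation defining membership in $S$: for $g\in S$ write $z\cdot g=\alpha_g z$ with $\alpha_g\in K$. Since $C$ is a nontrivial code we have $z\neq 0$, and right multiplication by any group element merely permutes the basis $G$ (the map $h\mapsto h\ast g$ is a bijection), so $z\cdot g\neq 0$ and hence every $\alpha_g$ is nonzero — a fact I will need for inverses.

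For the first part I would verify the subgroup axioms directly. The identity satisfies $z\cdot id=z$, so $id\in S$ with $\alpha_{id}=1$. If $g,h\in S$, then associativity in $KG$ gives $z\cdot(g\ast h)=(z\cdot g)\cdot h=\alpha_g(z\cdot h)=\alpha_g\alpha_h\,z$, so $g\ast h\in S$. For inverses, multiplying $z\cdot g=\alpha_g z$ on the right by $g^{-1}$ yields $z=\alpha_g(z\cdot g^{-1})$, whence $z\cdot g^{-1}=\alpha_g^{-1}z$ and $g^{-1}\in S$. Thus $S\leq G$.

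For the second part the idea is to convert the set $S$ into a lower bound on $\dim C^{\perp}$ and then let the MDS hypothesis translate this into a distance bound. Because $z$ and $1-z$ are complementary idempotents, $KG=zKG\oplus(1-z)KG=C\oplus C^{\perp}$, so $\dim C+\dim C^{\perp}=n$; because $C$ is MDS, $d(C)=n-\dim C+1$. Combining, $d(C)=\dim C^{\perp}+1$, so it suffices to exhibit $|S|-1$ linearly independent elements of $C^{\perp}=(1-z)KG$. For each $s\in S$ set $w_s=s-\alpha_s\,id$; then $z\cdot w_s=z\cdot s-\alpha_s z=\alpha_s z-\alpha_s z=0$, so $w_s=(1-z)w_s\in C^{\perp}$. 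The family $\{w_s:s\in S\setminus\{id\}\}$ is linearly independent, since in the group basis the coefficient of $w_s$ on the basis vector $s$ equals $1$ while every other $w_{s'}$ vanishes there; hence a vanishing combination $\sum_{s\neq id}c_s w_s=0$ forces each $c_s=0$. Therefore $\dim C^{\perp}\geq|S|-1$ and $d(C)\geq|S|$.

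The conceptual heart — and the step I expect to be the main obstacle — is recognizing that each $s\in S$ manufactures the dual codeword $s-\alpha_s\,id$; once this is seen, the MDS condition does the real work of converting a dimension bound on $C^{\perp}$ into the desired distance bound on $C$. The remaining points, namely that each $\alpha_g\neq 0$ and that the $w_s$ are independent, are routine but should be stated explicitly, since the inverse axiom in Part~1 and the counting in Part~2 both depend on them.
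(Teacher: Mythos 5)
Your proof is correct. Part 1 is essentially the paper's argument (the paper runs the one-step test $gh^{-1}\in S$, you verify closure and inverses separately), with one improvement: you justify that each scalar $\alpha_g$ is nonzero before inverting it, whereas the paper writes $\mu^{-1}$ without comment. For Part 2 you arrive at the same inequality $\dim(zKG)\leq n-|S|+1$ as the paper, but from the dual side: the paper observes that the spanning set $\{z\cdot g: g\in G\}$ of $C$ collapses, since the $|S|$ vectors $z\cdot g$ with $g\in S$ all lie on the line $Kz$, so $C$ is spanned by $z$ together with the $n-|S|$ remaining products; you instead exhibit $|S|-1$ explicit linearly independent dual codewords $w_s=s-\alpha_s\,id$ in $(1-z)KG$ and bound $\dim C^{\perp}$ from below. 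These two counts are rank--nullity duals of one another (your $w_s$ span a subspace of the kernel of left multiplication by $z$, whose image is what the paper bounds), and both arguments invoke the MDS hypothesis in the identical final step. Your version buys a little extra: concrete dual codewords, which make the bound constructive, and the explicit check that $z\neq 0$ forces $\alpha_g\neq 0$. One tiny remark: the linear independence of the $w_s$ does not actually require $\alpha_s\neq 0$, only the coefficient $1$ on the basis vector $s$, so that hypothesis is needed only for the inverse axiom in Part 1.
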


\begin{proof}
1. Since the identity element of $G$ lies in $S$, $S\neq \phi$. Consider $g,h\in S$, therefore $z g=\lambda z$, and $z h=\mu z $ for some $\lambda ,\mu \in K.$ Naturally $\mu ^{-1}z=z h^{-1}.$

Now $zgh^{-1}=\lambda  z h^{-1}=\lambda \mu ^{-1} z$, and $\lambda \mu^{-1}\in K.$ So $\forall g,h\in S,~gh^{-1}\in S,$ proving $S$ is a subgroup of $G$.

2. Any element in $zKG$ is of the form $z\cdot a=\sum_{g\in G}a_gz\cdot g $, i.e., $$z\cdot a=\sum_{g\in S}a_gz\cdot g + \sum_{g\not\in S}a_gz\cdot g=\lambda z+\sum_{g\not\in S}a_gz\cdot g $$
for some $\lambda \in K.$ Therefore $$dim(zKG)\leq n-|S|+1\Rightarrow n-d(C)+1\leq n-|S|+1\Rightarrow d(C)\geq |S|.$$
Hence proved.
\end{proof}

\textbf{Remark} The lower bound of $d(C)$, that has been obtained in Theorem $5$ and $6$ is useful, although we have already assumed that our group code is LCD, MDS. The main advantage of having these bounds is that we can still give some estimation of $d(C)$ without knowing the dimension of $C$.

\begin{coro}
For $S_1=\{g\in G:(1-z)\cdot g=scaler~multiple~of~(1-z)\}$. $S_1$ is a subgroup of $G$ and $d(C^{\perp})\geq |S_1|.$
\end{coro}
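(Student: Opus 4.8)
The plan is to recognize this corollary as nothing more than Theorem 6 applied to the dual code $C^\perp$ in place of $C$. The essential observation is that $C^\perp = (1-z)KG$ and that $1-z$ inherits exactly the algebraic properties that $z$ enjoyed in Theorem 6, so that the set $S_1$ is literally the set $S$ of that theorem formed from the idempotent $1-z$ instead of $z$.

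First I would verify that $1-z$ is a self-adjoint idempotent. Idempotency follows from $(1-z)^2 = 1 - 2z + z^2 = 1-z$, using $z^2 = z$. Self-adjointness follows from $\widehat{1-z} = \hat{1} - \hat{z} = 1-z$, since the identity is its own inverse (so $\hat{1} = 1$) and $\hat z = z$ by hypothesis. Thus $1-z = (1-z)^2 = \widehat{1-z}$, matching the standing assumption on the generating idempotent in Theorem 6.

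Next I would confirm that $C^\perp$ is again an LCD, MDS group code. The LCD property is symmetric: since $(C^\perp)^\perp = C$ and $C \cap C^\perp = \{0\}$, we immediately obtain $C^\perp \cap (C^\perp)^\perp = \{0\}$. For the MDS property, if $C$ is an $[n,k,d]_q$ code with $d = n-k+1$, then $C^\perp$ is an $[n,\, n-k,\, k+1]_q$ code, and $k+1 = n-(n-k)+1$ exhibits $C^\perp$ as MDS as well. This is the single step that leans on a standard external fact (the dual of an MDS code is MDS) rather than on direct manipulation of $z$, so it is the only place where I would pause to cite or justify.

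With these verifications in hand, Theorem 6 applies verbatim to $C^\perp = (1-z)KG$ under the substitution $z \mapsto 1-z$, and it delivers both conclusions simultaneously: $S_1$ is a subgroup of $G$, and $d(C^\perp) \geq |S_1|$. I do not anticipate a genuine obstacle; the only point demanding care is making explicit that $C^\perp$ meets every hypothesis of Theorem 6, after which the subgroup argument and the rank–nullity estimate transcribe word for word with $z$ replaced by $1-z$.
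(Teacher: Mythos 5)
Your proposal is correct and matches the paper's intent exactly: the paper offers no separate argument for this corollary precisely because it is Theorem 6 applied verbatim to the self-adjoint idempotent $1-z$ and the code $C^{\perp}=(1-z)KG$, just as you describe. Your explicit verifications that $1-z$ is a self-adjoint idempotent and that $C^{\perp}$ is again LCD and MDS (the latter being needed for the step $\dim(C^{\perp})=n-d(C^{\perp})+1$) are exactly the points one must check, so there is no gap.
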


\begin{theorem}
If characteristic of $K(=p)\mid |G|$ and number of elements in $G$ having order $p^r$ (for some integer $r$) is greater than $\frac{|G|}{2}-1$, then $KG$ cannot contain an LCD, MDS group code.
\end{theorem}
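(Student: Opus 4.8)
The plan is to argue by contradiction: assume $KG$ contains a nontrivial proper LCD, MDS group code $C=zKG$ with $z=z^2=\hat z$ and $\dim C = k$, where $0<k<n$ and $n=|G|$. Write $N$ for the number of non-identity elements of $G$ whose order is a power of $p$, so the hypothesis reads $N>\tfrac{n}{2}-1$. The first thing I would record is a translation of the set appearing in Theorem $4$(2): the union of all Sylow-$p$-subgroups of $G$ is exactly the set of $p$-elements of $G$, since every $p$-element generates a $p$-subgroup and therefore lies in some Sylow-$p$-subgroup, while conversely every element of a Sylow-$p$-subgroup is a $p$-element. Hence $\{\cup\{\text{Sylow-}p\text{-subgroups}\}\setminus\{id\}\}$ is precisely the set of those $N$ non-identity $p$-elements.

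Next I would bound the weights of the two idempotents $z$ and $1-z$. By Theorem $4$(2) the components of $z$ avoid all $N$ non-identity $p$-elements, so they lie among the $n-N$ elements consisting of $id$ together with the non-$p$-elements; this gives $wt(z)\le n-N$. The element $1-z$ is again idempotent and self-adjoint ($(1-z)^2=1-z$ and $\widehat{1-z}=1-z$) and generates $C^\perp=(1-z)KG$, which is itself LCD; applying Theorem $4$(2) to this code yields $wt(1-z)\le n-N$ by the same reasoning.

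Now I would invoke the MDS hypothesis, using that the dual of an MDS code is again MDS (already used in the proof of Theorem $5$). Since $z\in C$ is a nonzero codeword, $wt(z)\ge d(C)=n-k+1$; since $1-z\in C^\perp$ is a nonzero codeword and $\dim C^\perp=n-k$, we have $wt(1-z)\ge d(C^\perp)=k+1$. Combining these with the weight bounds gives
\[
n-k+1\le n-N \qquad\text{and}\qquad k+1\le n-N .
\]
Adding the two inequalities cancels $k$ and produces $n+2\le 2(n-N)$, that is $N\le \tfrac{n}{2}-1$, which directly contradicts the hypothesis $N>\tfrac{n}{2}-1$. Hence no such code can exist.

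I expect the genuinely delicate points to be bookkeeping rather than conceptual: first, correctly identifying the forbidden set of Theorem $4$(2) as exactly the $N$ non-identity $p$-elements, so as to obtain the clean common bound $wt(z),\,wt(1-z)\le n-N$; and second, ensuring that both $C$ and $C^\perp$ are legitimately MDS (so both minimum-distance formulas apply) and that $z$ and $1-z$ are genuinely nonzero, which is what forces us to restrict to proper codes with $0<k<n$. The single idea that makes everything work is exploiting the two codewords $z$ and $1-z$ simultaneously and \emph{adding} the resulting inequalities, which eliminates the unknown dimension $k$ entirely.
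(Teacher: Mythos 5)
Your proposal is correct and follows essentially the same route as the paper: both arguments bound the weights of the two idempotents $z$ and $1-z$ by $n$ minus the number of non-identity $p$-elements (via Theorem $4$(2)), convert these into the two MDS minimum-distance inequalities, and add them so that the unknown dimension cancels, yielding the contradiction $N\leq \frac{n}{2}-1$. Your version is slightly more careful about the bookkeeping (excluding the identity from the forbidden set and noting that $z\neq 0$ and $1-z\neq 0$ are needed), but the underlying argument is identical.
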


\begin{proof}
Let $C=zKG$, $z=z^2=\hat{z}$, be an LCD, MDS group code contained in $KG$. Let $D=\{ g\in G : |g|=p^r\}\subset G$ (for some integer $r$). We have $d(C^{\perp})\leq n-|D|$, since $(1-z)\in C^{\perp}$ and any component of $z$ cannot be an element of order $p^r$ in $G$. 

By assumption $C$ is MDS so $d(C^{\perp})=|G|-dim((1-z)KG)+1$, therefore $dim(zKG)+1\leq n-|D|$. Similarly $dim((1-z)KG)+1\leq n-|D|$; adding duo we get $n+2\leq 2n-2|D|,$ finally $|D|\leq \frac{|G|}{2}-1.$
\end{proof}

\begin{theorem}
$dim(KG)=dim(zKG)+|$linear independent (LI) right zero divisors of $z|$, where $z=z^2 = \hat{z}$.
\end{theorem}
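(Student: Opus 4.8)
The plan is to read the statement as a direct application of the Rank-Nullity theorem quoted in the introduction, applied to the operator of left multiplication by $z$. Define $\phi : KG \to KG$ by $\phi(a) = z \cdot a$. Since multiplication in $KG$ distributes over addition and commutes with scalars from $K$ (recall $K \subseteq C(KG)$), the map $\phi$ is a $K$-linear operator on the finite-dimensional vector space $KG$, so Rank-Nullity applies and gives $dim(KG) = rank(\phi) + nullity(\phi)$.

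The core of the argument is then to identify the range and the kernel of $\phi$ with the two quantities appearing in the statement. First, by the very definition of the right ideal generated by $z$, the range of $\phi$ is $\{z \cdot a : a \in KG\} = zKG = C$, so that $rank(\phi) = dim(zKG)$. Second, the kernel of $\phi$ is $\{a \in KG : z \cdot a = 0\}$, which is precisely the set of right zero divisors of $z$ together with $0$; being a subspace, its dimension equals the maximal number of linearly independent right zero divisors of $z$, i.e. $nullity(\phi) = |\text{LI right zero divisors of } z|$. Substituting both identifications into the Rank-Nullity identity yields the claimed formula immediately.

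I do not anticipate a serious obstacle here; the only point requiring care is the bookkeeping that ``the number of LI right zero divisors'' must be read as the vector-space dimension of the right annihilator $\ker(\phi) = \{a : z\cdot a = 0\}$, rather than as a naive count of individual elements. It is worth noting that the full hypothesis $z = z^2 = \hat z$ is not actually needed for the dimension count, since Rank-Nullity holds for an arbitrary $z \in KG$. However, idempotency supplies a cleaner structural picture that I would mention as a sanity check: writing $a = z\cdot a + (1-z)\cdot a$ exhibits $KG = zKG \oplus \ker(\phi)$ as an internal direct sum, because $z(z\cdot a) = z\cdot a$ forces $zKG \cap \ker(\phi) = \{0\}$, and this decomposition recovers the same dimension formula without invoking Rank-Nullity at all.
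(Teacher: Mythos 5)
Your proposal is correct and follows essentially the same route as the paper: the paper also defines the map $a \mapsto z\cdot a$, checks $K$-linearity, identifies the image with $zKG$ and the kernel with the right annihilator of $z$ (read as a subspace whose dimension is the number of linearly independent right zero divisors), and concludes by Sylvester's Rank--Nullity theorem. Your added observation that idempotency yields the internal direct sum $KG = zKG \oplus \ker(\phi)$ is a nice sanity check but not part of the paper's argument.
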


\begin{proof} Define $\varphi: KG\rightarrow zKG$ by $\varphi(a)=z\cdot a$, where ``$\cdot$'' is the usual multiplication on $KG$.

Claim: $\varphi$ is a linear map. Now, for $a, b \in KG$ and $\alpha, \beta \in K$, we get
\begin{eqnarray*}
\varphi (\alpha a+\beta b) &=& z\cdot(\alpha a+\beta b), \\
                            &=&  z\cdot(\alpha a) + z\cdot(\beta b),~\mathrm{using ~distributive ~law},\\
                            &=&  (z\cdot\alpha)(a)+(z\cdot\beta) (b),~\mathrm{using ~associativity},\\
                            &=& (\alpha \cdot z)(a)+(\beta \cdot z)(b),~\mathrm{since}~K\in C(KG),\\
                            &=& \alpha (z\cdot a)+\beta (z \cdot b),~\mathrm{using~associativity},\\
                            &=& \alpha \varphi(a)+\beta \varphi(b).
\end{eqnarray*}\\
                            Kernel of $\varphi$ is precisely the set of all right zero divisors of $z$ and dimension of kernel will be the number of linearly independent right zero divisors of $z$. By the construction the mapping is onto and hence by Sylvester's Rank-Nullity theorem we conclude the result.
\end{proof}                       
 
\begin{coro} Proceeding in exactly similar way and putting $(1-z)$ instead of $z$ we get\\ \[dim(KG) = dim((1-z)KG)+|LI~right~zero~dividors~of~(1-z)|.\]
\end{coro}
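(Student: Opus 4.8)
The plan is to realize the formula as a direct instance of Sylvester's Rank--Nullity theorem, applied to the $K$-linear map given by left multiplication by $z$. Concretely, I would define $\varphi \colon KG \to zKG$ by $\varphi(a) = z \cdot a$, where ``$\cdot$'' is the ring multiplication of $KG$. Since $KG$ is a finite-dimensional vector space over $K$ of dimension $|G|$, and $zKG$ is a subspace of it, this left-multiplication map is the natural candidate for which the two terms on the right-hand side of the claimed identity will appear as rank and nullity.

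First I would verify that $\varphi$ is $K$-linear. This is entirely routine: additivity follows from the distributive law in the ring $KG$, and $K$-homogeneity follows from the fact that scalars from $K$ lie in the centre $C(KG)$, so they can be moved freely past the factor $z$. This is precisely the computation already carried out in the earlier theorem for a map of the same shape, so I would reuse that verification verbatim.

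Next I would match the two quantities to the ingredients of Rank--Nullity. The range of $\varphi$ is all of $zKG$, because every element of $zKG$ is by definition of the form $z \cdot a$ for some $a \in KG$; hence $\varphi$ is surjective and $\operatorname{rank}(\varphi) = \dim(zKG)$. The kernel of $\varphi$ is $\{a \in KG : z \cdot a = 0\}$, which is exactly the set of right zero divisors of $z$ together with the element $0$. A basis of $\ker\varphi$ is therefore a maximal linearly independent collection of right zero divisors of $z$, so $\dim(\ker\varphi)$ equals the number of linearly independent right zero divisors of $z$. Substituting into $\dim(KG) = \operatorname{rank}(\varphi) + \operatorname{nullity}(\varphi)$ yields the stated identity.

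I do not anticipate a genuine obstacle, since the argument is short and structural; the only point demanding care is the bookkeeping in the final step, namely reading ``number of linearly independent right zero divisors of $z$'' as $\dim(\ker\varphi)$ rather than as a raw count of all such elements, and observing that excluding $0$ from the zero divisors does not change the dimension count. I would also remark that the hypotheses $z = z^2 = \hat{z}$ play no role in this particular computation; they are retained only because $z$ is the self-adjoint idempotent generating the LCD code under study, and the accompanying corollary is obtained by running the identical argument with $1-z$ in place of $z$.
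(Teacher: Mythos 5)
Your proposal is correct and takes essentially the same route as the paper: the corollary is obtained by applying the identical Sylvester rank--nullity argument to the left-multiplication map $a \mapsto (1-z)\cdot a$, whose image is $(1-z)KG$ and whose kernel is the right annihilator of $1-z$. Your added remark that the hypotheses $z = z^2 = \hat z$ are not actually used in this computation is accurate and slightly sharper than what the paper states.
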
                            
                            
\begin{coro} For an $[n,k,d]_q$ singleton bound permits us to write $d \leq n-k+1$, so by the above theorem if KG contains an LCD group code $C$ of dimension $k$ with $|G|=n$ then $n-k=|$LI right zero divisors of $z|$, and hence $C$ is given by $d(C)\leq|$LI right zero divisors of $z|+1$. For an MDS linear $[n,k,d]_q$ code $d(C)=|$LI right zero divisors of $z|$+1.
\end{coro}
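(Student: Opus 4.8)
The plan is to read the corollary off from the theorem immediately preceding it by a direct substitution of the relevant dimensions, and then to invoke the Singleton bound. First I would recall that the theorem just established gives
$$\dim(KG)=\dim(zKG)+|\text{LI right zero divisors of }z|.$$
Since $KG$ is a vector space over $K$ whose basis is $G$, we have $\dim(KG)=|G|=n$; and since $C=zKG$ is by hypothesis an LCD group code of dimension $k$, we have $\dim(zKG)=k$. Substituting these two identities into the theorem and rearranging yields $n-k=|\text{LI right zero divisors of }z|$.

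With this identity in hand, I would apply the Singleton bound. For any $[n,k,d]_q$ code one has $d\leq n-k+1$; replacing $n-k$ by $|\text{LI right zero divisors of }z|$ gives at once
$$d(C)\leq|\text{LI right zero divisors of }z|+1.$$
For the MDS case I would instead use the defining equality of an MDS code, namely $d=n-k+1$, and the same substitution then produces the exact formula $d(C)=|\text{LI right zero divisors of }z|+1$.

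Every step here is either a substitution or an appeal to a fact already proved, so I do not anticipate a genuine obstacle; this statement is a routine consequence of the preceding theorem rather than an independent result. The only points that warrant a line of justification are the two dimension identifications: that $\dim(KG)=n$ because $G$ is a $K$-basis of $KG$, and that $\dim(zKG)=k$ because $k$ is by definition the dimension of the code $C=zKG$. Once these are noted, the corollary follows immediately from the last theorem together with the Singleton bound, with the MDS clause distinguished only by using the Singleton equality in place of the inequality.
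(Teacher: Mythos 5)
Your proposal is correct and follows exactly the route the paper intends: the corollary is an immediate substitution of $\dim(KG)=n$ and $\dim(zKG)=k$ into the preceding theorem, followed by the Singleton bound (inequality in general, equality in the MDS case). The paper offers no separate proof beyond the reasoning embedded in the statement itself, and your write-up supplies precisely that reasoning.
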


\begin{theorem}
Let $G$ be a finite group of order $n$ and $K$ be a field of characteristic $p$ and suppose $C=zKG$, where $z=z^2=\hat z$ is an LCD group code. If $C$ is an MDS code then the components of $z$ generate $G$. 
\end{theorem}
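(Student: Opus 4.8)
The plan is to set $H=\langle\,\text{components of }z\,\rangle\le G$, the subgroup generated by the components of $z$, and to show $H=G$ by ruling out the index $[G:H]\ge 2$ through the Singleton bound. The pivotal observation is that every component of $z$ lies in $H$, so $z\in KH$; since $KH$ is a subalgebra closed under multiplication, $z=z^2$ remains an idempotent inside the \emph{smaller} group algebra $KH$. Hence $zKH$ is itself a group code, of length $|H|$; I write $d_0=\dim_K(zKH)$ and $d_0'=d(zKH)$ for its dimension and minimum distance.

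First I would establish a block decomposition of $C$ along the right cosets of $H$. Partitioning $G=\bigsqcup_{x}Hx$ induces the coordinate splitting $KG=\bigoplus_{x}K[Hx]$, and the identity $z\cdot(hx)=(z\cdot h)\cdot x$ shows that each generator $z\cdot g$ of $C$ falls inside a single block $K[Hx]$. Because right multiplication by the fixed element $x$ is a weight-preserving linear bijection carrying $KH$ onto $K[Hx]$, this gives
$$ C=zKG=\bigoplus_{x}(zKH)\cdot x, $$
a direct sum on \emph{disjoint} coordinate blocks, each a weight-preserving copy of the code $zKH$. Reading dimension and minimum distance off this decomposition yields the two facts I need: $\dim_K C=[G:H]\,d_0$, and $d(C)=d_0'$, since a minimum-weight codeword may be taken supported inside one block.

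Next I would confront these with the two length constraints. The MDS hypothesis on $C$ (length $n=[G:H]\,|H|$, dimension $[G:H]\,d_0$) gives $d_0'=d(C)=n-\dim_K C+1=[G:H]\,(|H|-d_0)+1$, while the Singleton bound applied to the shorter code $zKH$ gives $d_0'\le |H|-d_0+1$. Subtracting forces $([G:H]-1)(|H|-d_0)\le 0$. As $|H|-d_0\ge 0$ always holds, either $[G:H]=1$, whence $H=G$ as desired, or $|H|=d_0$, i.e.\ $zKH=KH$; but an idempotent that generates the whole unital algebra $KH$ as a right ideal is a left identity and hence equals the two-sided identity, forcing $z=id$. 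This last alternative is precisely the degenerate case $C=KG$, which I would exclude by assuming $C$ is a proper code (equivalently $z\ne id$). This exclusion is the one subtle point: for $C=KG$ the single component $id$ generates only the trivial subgroup, so the statement is literally false without properness, consistent with the ``proper LCD code'' language of Theorem~4.

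The main obstacle is genuinely the structural lemma $\dim_K C=[G:H]\dim_K(zKH)$ and $d(C)=d(zKH)$; everything else is arithmetic. Its whole force comes from the remark that $z\in KH$, which confines each $z\cdot g$ to a single coset-block and thereby lets the global MDS/Singleton equation be played off against the \emph{local} Singleton bound inside $KH$. Once the block structure is in hand the inequality is immediate, and it is worth noting that self-adjointness $z=\hat z$ is never actually used here---only idempotency and the properness of $C$.
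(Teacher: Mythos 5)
Your proof is correct, but it takes a genuinely different route from the paper's. The paper argues through the dual: it uses $d(C)\le wt(z)$ and $d(C^\perp)\le wt(1-z)$ (both codewords being supported in $H\cup\{id\}=H$), adds the two MDS equalities for $C$ and $C^\perp$ to get $n+2=d(C)+d(C^\perp)\le 2|H|+1$, and concludes $[G:H]<2$ by Lagrange. You never touch the dual at all: your key structural lemma $C=\bigoplus_x (zKH)\cdot x$ over the right cosets of $H$, giving $\dim C=[G:H]\dim(zKH)$ and $d(C)=d(zKH)$, lets you play the global MDS equality against the local Singleton bound for the shorter code $zKH$, and the factorization $([G:H]-1)(|H|-\dim zKH)\le 0$ does the rest. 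This buys two things the paper's argument does not: (i) you use only that $z$ is an idempotent and $C$ is a proper MDS right ideal --- the hypotheses $z=\hat z$ and LCD-ness are never needed, whereas the paper's step $1-z\in C^\perp$ quietly relies on $z=\hat z$; and (ii) you get the repetition-type structure of $C$ over cosets as a byproduct. You are also right to flag the degenerate case $C=KG$ (where $z=id$ generates only the trivial subgroup): the paper's proof silently assumes properness as well, since $d(C^\perp)$ is meaningless when $C^\perp=\{0\}$, so your explicit exclusion is a genuine improvement in rigor rather than a deviation.
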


\begin{proof} Suppose $z$ has components ${h_1,h_2, \dots, h_t}$. Now let $H$ be the subgroup of $G$ generated by  ${h_1,h_2, \dots, h_t}$. We have, $$dim(KG)-dim(zKG)+1=d(C).$$ $$\mathrm{Also},~~dim(KG)-dim((1-z)KG)+1=d(C^\perp).$$
                            
Adding these two equalities, we get $$2\cdot dim(KG)-dim(zKG)+1-dim((1-z)KG)+1=d(C)+d(C^{\perp}).$$ Since $C$ is an MDS code its dual is also an MDS code and therefore, we have $$n+2=d(C)+d(C^{\perp}).$$ 

Now,
\begin{eqnarray*}
 d(C)+d(C^\perp)&\leq & |\mathrm{support~of~} z| + |\mathrm{support ~of~}(1-z)|, \\
                            &\leq & 2 \cdot |\mathrm{support ~of~}z| + 1,~\because~|\mathrm{support~ of~}(1-z)|\leq |\mathrm{support~ of~}z|+1,\\
                            &\leq & 2\cdot |H|+1;
\end{eqnarray*}
implies $|G|<2\cdot |H|.$ And hence $G=H$.
\end{proof}

\textbf{Remark} In \cite{CruzWolfgang}, the Lemma $4.4$  proves ``For $G$ be an abelian group and $K$ be a field of characteristic $p.$ Suppose $C$ is an LCD group code. If $C$ is an MDS code then characteristic of $K$ does not divide $n$, the cardinality of $G$.'' But they have not concluded anything when $G$ is non abelian. The question we will address here has been asked in \cite{CruzWolfgang}, it says for $G$ being an arbitrary finite group and $KG$ contains an LCD, MDS group code, then is it always true that the characteristic of $K$ does not divide $|G|$? Although we have not produced a definite answer to this question, but we have produced a result which answers the question in affirmation, not for arbitrary groups but for a class of groups which is quite larger than the class of finite commutative groups. And the Lemma 4.4 proved in \cite{CruzWolfgang} comes as a corollary to our result.

\begin{theorem}
For $G$ is a finite group such that each Sylow subgroup of $G$ is unique, and let $K$ be a field of characteristic $p$. If $KG$ contains an LCD group code $C$ which is also an MDS code, then $p\nmid |G|$.
\end{theorem}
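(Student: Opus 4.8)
The plan is to argue by contradiction: assume $p \mid |G|$ and that $C = zKG$ with $z = z^2 = \hat z$ is LCD and MDS, and then show that the components of $z$ cannot generate $G$, contradicting the immediately preceding theorem. First I would translate the hypothesis. Since every Sylow subgroup of $G$ is unique, each is normal, so $G$ is the internal direct product of its Sylow subgroups; in particular $G = P \times Q$, where $P$ is the nontrivial Sylow-$p$-subgroup ($|P| = p^m$) and $Q$ is the product of the remaining Sylow subgroups, with $\gcd(|Q|,p)=1$. Correspondingly $KG \cong KP \otimes_K KQ$, where $KP$ is local (its augmentation ideal $\omega(KP)$ is nilpotent because $P$ is a $p$-group) and $KQ$ is semisimple by Maschke's theorem. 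Under this hypothesis the nontrivial $p$-elements of $G$ are exactly $P\setminus\{id\}$, so the second part of the Sylow-support theorem above says that no component of $z$ other than $id$ lies in $P$.

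The key idea is to push $z$ through the quotient by the Jacobson radical. One has $\mathrm{rad}(KG) = \omega(KP)\otimes KQ$, hence $KG/\mathrm{rad}(KG) \cong KQ$. As $z$ is idempotent, its image $\bar z$ is an idempotent of the semisimple algebra $KQ$, and $z$ is a lift of $\bar z$ along a nilpotent ideal. When $G$ is abelian (the case of Cruz and Willems) everything in sight is commutative, idempotents lift uniquely, and the evident lift $1\otimes\bar z$ already sits in $1\otimes KQ$; by uniqueness $z = 1\otimes \bar z$, so $\mathrm{supp}(z)\subseteq Q$. Then $\langle\mathrm{supp}(z)\rangle\subseteq Q\subsetneq G$, contradicting the preceding theorem that an LCD, MDS code forces $\mathrm{supp}(z)$ to generate $G$. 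This already recovers the Cruz–Willems lemma as the promised special case.

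To reach the full statement I must remove commutativity. The robust fact is that $KP$ is local, so the algebra homomorphism $\Pi_P\colon KG\to KP$ induced by the projection $G\to P$ carries the idempotent $z$ to an idempotent of a local ring, forcing $\Pi_P(z)\in\{0,1\}$. Replacing $C$ by $C^\perp$ (again LCD and MDS) if necessary, I may assume $\Pi_P(z)=0$, that is, the coefficients of $z$ sum to zero along every coset $aQ$. The aim is then to combine this coset condition with self-adjointness $z=\hat z$ and the support restriction $\mathrm{supp}(z)\cap P=\{id\}$ to confine $\mathrm{supp}(z)$ to a proper subgroup of $G$, after which the preceding theorem again yields the contradiction; the distance bound $d(C)\ge p^m+1$ is available as an auxiliary constraint for ruling out the low-weight pieces coming from the local block $1\otimes e_Q\,KG\cong KP$, where $e_Q$ is the central idempotent of $KQ$ attached to the trivial representation.

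The main obstacle is precisely this non-commutative confinement. For non-abelian $G$ the semisimple factor $KQ$ carries genuine matrix blocks $M_{n_i}(D_i)$ with $n_i>1$; idempotents of $KG$ then lift from $KG/\mathrm{rad}(KG)$ only up to conjugacy, and a self-adjoint but non-central idempotent may have support escaping $1\otimes KQ$ and, a priori, generating all of $G$. Showing that no such idempotent can simultaneously be self-adjoint, obey the coset and support constraints above, and give an MDS code is the delicate core of the proof; it is here that the full force of the hypothesis (every Sylow subgroup unique, i.e. $G$ nilpotent) must be used, not merely to split off $P$ but to control the fine structure of the blocks. I expect this to be the step demanding the most care.
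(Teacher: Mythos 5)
Your proposal does not close the case that constitutes the entire content of the theorem. You set up the structure correctly: uniqueness of all Sylow subgroups makes $G$ nilpotent, so $G=P\times Q$ with $P$ the Sylow-$p$-subgroup and $Q$ the Hall $p'$-subgroup, $KG\cong KP\otimes_K KQ$, $\mathrm{rad}(KG)=\omega(KP)\otimes KQ$, and the abelian case follows from unique idempotent lifting together with the preceding theorem (the components of $z$ must generate $G$). But the abelian case is exactly Lemma 4.4 of Cruz--Willems, which this theorem is meant to extend. For non-abelian $G$ you name the required step --- confining $\mathrm{supp}(z)$ to a proper subgroup despite the matrix blocks of $KQ$ and the failure of unique idempotent lifting --- and then explicitly defer it (``the delicate core of the proof'', ``the step demanding the most care''). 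A deferred core step is a gap, not a proof: as written, your argument proves only the already-known commutative case.

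For comparison, the paper closes this step by an elementary group-theoretic argument rather than by the radical/lifting machinery: it combines the support restriction of Theorem 4(2) (no component of $z$ lies in $P_1\setminus\{id\}$) with Theorem 9 (the components of $z$ generate $G$), and then argues that the components lie in the product of the Sylow subgroups other than $P_1$, which --- all being normal --- is a subgroup of order coprime to $p_1$, so no product of components can equal a nontrivial element of $P_1$. In your notation: the $p'$-elements of a nilpotent group form precisely the subgroup $Q$, so once the support of $z$ is known to consist of $p'$-elements it generates a subgroup of $Q\subsetneq G$ and the contradiction is immediate. Be aware, though, that the restriction actually available (components avoid $P\setminus\{id\}$) is weaker than ``every component is a $p'$-element'': an element $g=g_pg_{p'}$ with both parts nontrivial avoids $P$ yet has order divisible by $p$, and such elements can generate all of $G$. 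So the confinement step you are missing is genuinely the crux, and supplying it --- whether by your ring-theoretic route or by sharpening the support restriction to exclude all elements with nontrivial $p$-part --- is what a complete proof must do.
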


\begin{proof}
Suppose $p=p_1\mid |G|$ and $|G|=p_1^{\alpha_1}\times p_2^{\alpha_2}\times \cdots \times p_k^{\alpha_k}$,  where $p_1,p_2,\cdots ,p_k$ are distinct primes. Suppose $P_i$ denotes the Sylow-$p_i$-subgroup of $G$, ($1\leq i\leq k$). Now $C$ is an LCD group code, so $C$ is of the form $zKG$, where $z=z^2=\hat z$. Since $z^{p_1^{\alpha_1}}=z$, so components of $z\subseteq \lbrace $\{Sylow-$p_1$-subgroup\}$\setminus\lbrace id\rbrace  \rbrace^c$, complement of $\lbrace$\{Sylow-$p_1$-subgroup\}$\setminus\lbrace id\rbrace  \rbrace$. Again for $C$ being an LCD and MDS code, components of $z$ generates $G$, therefore for $x\in P_1$, $x=a_1\ast a_2\ast \cdots \ast a_t$, where $a_i$ comes from the components of $z$, and may not be distinct. Now for $P_i\neq P_j$ are two Sylow subgroups of $|G|$ other than $P_1$, then $P_iP_j$ is also a subgroup of $G$, since they are normal in $G$, and $|P_iP_j|=\frac{|P_i||P_j|}{|P_i\cap P_j|}$. Since $P_i\cap P_j=\lbrace id\rbrace$, $|P_iP_j|=|P_i||P_j|$. Furthermore $gcd(|P_iP_j|,|P_1|)=1$. In the similar way we can show that for $P_{i_1},P_{i_2},\dots ,P_{i_r}$, being a collection of distinct Sylow subgroups of $G$, other than $P_1$; $P_{i_1}P_{i_2}\cdots P_{i_r}$ forms a subgroup of $G$ and has order co-prime to $p_1$. As $a_1, a_2, \dots, a_t$; come from different Sylow subgroups of $G$ other than $P_1$ so, $a_1\ast a_2\ast \cdots \ast a_t$ has order prime to $p_1$ and hence cannot express a member of $P_1$, therefore we arrive at a contradiction to our hypothesis that $C$ is an LCD and MDS code. So, $p_1\nmid |G|$ and we are done.
\end{proof}

\begin{coro}
For $G$ is a finite abelian group and let $K$ be a field of characteristic $p$. If $KG$ contains an LCD group code $C$ which is also an MDS code, then $p\nmid |G|$.
\end{coro}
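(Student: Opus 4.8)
The plan is to read this corollary as the abelian specialization of the preceding theorem, so that the whole argument reduces to verifying a single group-theoretic hypothesis. That theorem delivers the conclusion $p \nmid |G|$ whenever every Sylow subgroup of $G$ is unique; hence I would only have to check that a finite abelian group meets this condition, and then the conclusion transfers unchanged. Nothing about the code $C$, its idempotent generator $z$, or the MDS/LCD structure needs to be re-examined, since all of that work has already been discharged in the theorem being specialized.

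First I would recall that in an abelian group every subgroup is normal, so for each prime $q \mid |G|$ a Sylow-$q$-subgroup $P$ is normal in $G$. I would then invoke the conjugacy clause of Sylow's theorems (quoted in the Introduction): any two Sylow-$q$-subgroups are conjugate. A normal subgroup coincides with all of its conjugates, so $P$ has no conjugate but itself, forcing $P$ to be the unique Sylow-$q$-subgroup. As $q$ was an arbitrary prime divisor of $|G|$, every Sylow subgroup of $G$ is unique. At this point the hypotheses of the preceding theorem are satisfied by $G$ and $K$ together with the given LCD, MDS code $C = zKG$, so that theorem applies and returns $p \nmid |G|$; this also recovers the Cruz--Willems Lemma $4.4$ recalled in the Remark above.

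The main obstacle is essentially the reduction itself, i.e.\ certifying that ``abelian'' implies ``all Sylow subgroups unique''. This step is entirely standard, resting only on combining normality of subgroups with the conjugacy part of Sylow's theorems, and it needs no appeal to the distance bounds of the earlier theorems, to the generator $z$, or to the MDS/LCD machinery beyond what the quoted theorem already absorbs. Consequently there is no genuinely hard step remaining once the uniqueness of Sylow subgroups in the abelian setting has been observed; the substance of the corollary lies wholly in the theorem it specializes.
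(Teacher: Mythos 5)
Your proposal is correct and follows exactly the paper's own argument: observe that in a finite abelian group every subgroup is normal, so by the conjugacy part of Sylow's theorems each Sylow subgroup is unique, and then apply the preceding theorem. The only difference is that you spell out the normal-implies-unique step in more detail than the paper does.
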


\begin{proof}
Since $G$ is finite abelian, all its Sylow subgroups are normal, hence unique. Therefore conclusion drawn from the preceding theorem. 
\end{proof}

\textbf{Alert!!!} There are non-commutative groups, having unique Sylow subgroups. Let $U$ be the group of all upper triangular matrices, having $1$ in diagonal and entries from $Z_3$. Then $|U|=27$, and consider $G=D_4\times U$, where $D_4$ is the dihedral group of order $8$. Then $G$ is non abelian and $|G|=2^3\times 3^3$. The lone Sylow-2-subgroup and Sylow-3-subgroup of $G$ are $D_4$ and $U$ respectively.

\begin{theorem}
For $G$ is a finite group and $K$ is a field of characteristic $p$ and $p\mid |G|$. Theorem $10$ can be made stronger, without the requirement that Sylow-$p$-subgroup is unique.
\end{theorem}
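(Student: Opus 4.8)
The plan is to re-examine the proof of Theorem $10$ and pin down precisely where normality of the Sylow subgroups is used; the observation is that the Sylow-$p$-subgroup itself is never actually required to be normal. Accordingly I would first make the statement precise: let $G$ be a finite group with $p\mid|G|$ and $K$ a field of characteristic $p$, and suppose every Sylow subgroup of $G$ \emph{other than} the Sylow-$p$-subgroup is unique; then $KG$ contains no proper LCD, MDS group code. This genuinely enlarges the class covered by Theorem $10$: for instance $G=S_3$ with $p=2$ has a unique, hence normal, Sylow-$3$-subgroup but three Sylow-$2$-subgroups, so Theorem $10$ does not apply while the strengthened version does.

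First I would run the opening of Theorem $10$ unchanged. Assume toward a contradiction that $C=zKG$ with $z=z^2=\hat z$ is a proper LCD, MDS code, and write $|G|=p^{\alpha}m$ with $gcd(p,m)=1$, letting $P_2,\dots,P_k$ be the (unique, hence normal) Sylow subgroups attached to the primes dividing $m$. Two facts transfer without modification: by the second part of Theorem $4$, no non-identity component of $z$ may have order a power of $p$, so every component of $z$ is $p$-regular; and by Theorem $9$, since $C$ is LCD and MDS, the components of $z$ generate $G$.

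Next I would assemble $N=P_2P_3\cdots P_k$. Since the $P_i$ are normal with pairwise coprime orders, $N$ is a normal subgroup with $|N|=|P_2|\cdots|P_k|=m$, i.e.\ a normal Hall $p'$-subgroup of $G$; this is exactly the product computation used in Theorem $10$, and it is the only place the hypotheses enter, with the Sylow-$p$-subgroup $P_1$ never being invoked. The crux, and the step I expect to be the main obstacle, is to show that \emph{every} $p$-regular element of $G$ already lies in $N$. For this I would pass to the quotient $G/N$, which has order $p^{\alpha}$ and is therefore a $p$-group: if $g$ is $p$-regular then the order of $gN$ divides both $|g|$ and $|G/N|=p^{\alpha}$, and since these are coprime $gN$ is trivial, so $g\in N$.

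Finally I would close the loop. Every component of $z$ is $p$-regular, hence lies in $N$, so the subgroup they generate is contained in the proper subgroup $N$ (proper because $p^{\alpha}>1$). This contradicts the fact, from Theorem $9$, that the components of $z$ generate $G$. Therefore no proper LCD, MDS code can exist when $p\mid|G|$, which is precisely the promised strengthening of Theorem $10$. I regard the quotient argument of the previous paragraph as the decisive point, since it replaces the element-by-element reasoning of Theorem $10$ (which implicitly treats each component as a prime-power element) with a uniform statement valid for all $p$-regular elements, and it is exactly this uniformity that frees us from needing $P_1$ to be normal.
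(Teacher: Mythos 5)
Your overall route is exactly the paper's: the printed proof of this theorem is literally ``the proof is exactly the same as Theorem 10,'' and what you have done is spell out why that is supposed to work --- the normality of $P_1$ is never invoked, only the normality of the remaining Sylow subgroups is needed to form $N=P_2P_3\cdots P_k$, and the contradiction comes from the components of $z$ being trapped inside the proper normal Hall $p'$-subgroup $N$ while Theorem 9 forces them to generate $G$. Your precise restatement of what ``stronger'' means, and your packaging of the final step as ``$G/N$ is a $p$-group, so every $p$-regular element lies in $N$,'' are genuine improvements in exposition over the paper's element-by-element computation with products of Sylow subgroups.

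However, one step does not follow, and it is the load-bearing one: from ``no non-identity component of $z$ has order a power of $p$'' (Theorem 4, part 2) you conclude ``every component of $z$ is $p$-regular.'' These are not equivalent. An element of order $pq$, with $q$ a prime different from $p$, lies in no Sylow-$p$-subgroup, so Theorem 4 does not exclude it from the components of $z$; yet its order is divisible by $p$, it is not $p$-regular, and it does not lie in $N$. For such a component your quotient argument gives nothing, and the containment of the subgroup generated by the components of $z$ inside $N$ collapses, taking the contradiction with it. To be fair, the paper's own Theorem 10 makes an even stronger unjustified assumption at the same spot --- it asserts that each component $a_i$ ``comes from'' some Sylow subgroup other than $P_1$, i.e., has prime-power order --- so you have faithfully reproduced (and in fact slightly weakened) a gap that is already in the source. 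But as written, neither your argument nor the paper's rules out components of mixed order, and that is precisely where both proofs would fail.
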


\begin{proof}
The proof is exactly same as Theorem $10$.
\end{proof}

\begin{coro}
For $G$ is a finite group and $K$ is a field of characteristic $p$ ($p$ is an odd prime) and $p\mid |G|.$ If $|G|=2\cdot p^n$, then $KG$ cannot contain an LCD and MDS group code.
\end{coro}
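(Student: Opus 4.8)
The plan is to reduce the claim to the counting criterion of Theorem 7, so that the only real work is a Sylow-theoretic count of the $p$-power-order elements of $G$. First I would pin down the number $n_p$ of Sylow-$p$-subgroups. By Sylow's third theorem $n_p \equiv 1 \pmod p$ and $n_p \mid |G| = 2p^n$; since $n_p \equiv 1\pmod p$ forces $\gcd(n_p,p)=1$, the divisibility collapses to $n_p \mid 2$, so $n_p \in \{1,2\}$. As $p$ is an odd prime, $n_p = 2$ would give $2 \equiv 1\pmod p$, i.e. $p \mid 1$, which is absurd; hence $n_p = 1$ and the Sylow-$p$-subgroup $P$ is unique and normal.

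Next I would count the $p$-power-order elements. Because $P$ is the unique Sylow-$p$-subgroup, every element of $G$ whose order is a power of $p$ lies in $P$ (the cyclic $p$-subgroup it generates sits inside some Sylow-$p$-subgroup, which can only be $P$), and conversely each of the $|P|=p^n$ elements of $P$ has $p$-power order. Thus the set of $p$-power-order elements is exactly $P$, of size $p^n = \tfrac{|G|}{2}$. This is the intended punchline: the number of such elements exceeds $\tfrac{|G|}{2}-1 = p^n-1$, so by Theorem 7 the algebra $KG$ can contain no LCD, MDS group code.

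The delicate point --- and the step I expect to be the real obstacle --- is that this count lands exactly on the Theorem 7 threshold. The $p^n$ elements of $P$ include the identity (of order $p^0=1$), and it is precisely the identity that pushes the count from $p^n-1$ up to $p^n > \tfrac{|G|}{2}-1$; excluding it would leave only $p^n-1$ non-identity $p$-elements, which merely meets, rather than beats, the bound. So the argument hinges on reading Theorem 7 with the identity counted among the order-$p^r$ elements. If a strict margin were instead required, I would fall back on a finer analysis: Theorem 4(2) (the components of $z$ avoid $P\setminus\{id\}$), Theorem 9 (for an LCD, MDS code the components of $z$ generate $G$), and the MDS duality $d(C)+d(C^\perp)=|G|+2$ together force $z$ to be supported on all of $\{id\}\cup(G\setminus P)$; writing $z=\alpha\,id+ut$ with $t\in G\setminus P$ and $u\in KP$ of full support, the relations $z^2=z$ and $z=\hat z$ yield $u\hat u=$ a nonzero scalar, which in the local algebra $KP$ (as $\mathrm{char}\,K=p$ divides $|P|$) is incompatible with $u$ having full support. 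Establishing this incompatibility cleanly is where the effort lies.
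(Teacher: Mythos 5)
There is a genuine gap here, and you have located it yourself: your reduction to Theorem 7 lands exactly on the threshold rather than strictly beyond it. Since $n_p\equiv 1\pmod p$ and $n_p\mid 2p^n$ indeed force $n_p=1$, the Sylow-$p$-subgroup $P$ is unique and the elements of $p$-power order are precisely the $p^n$ elements of $P$; excluding the identity there are $p^n-1=\frac{|G|}{2}-1$ of them. Theorem 7 has to be read with the identity excluded from the set $D$: its proof bounds $d(C^\perp)\leq n-|D|$ on the grounds that no component of $z$ lies in $D$, and by Theorem 4(2) the identity is always an admissible component of $z$, so $id\notin D$ is forced. Under that reading your count meets the bound with equality and Theorem 7 yields no contradiction; under your preferred reading (identity counted among the order-$p^r$ elements), the argument inside Theorem 7 only delivers $|D|\leq\frac{|G|}{2}$ rather than $|D|\leq\frac{|G|}{2}-1$, and $|D|=p^n$ again meets that with equality. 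Either way the counting criterion does not close the case. Your fallback is the right kind of finer analysis --- forcing $z$ to be fully supported on $\{id\}\cup(G\setminus P)$ via the MDS equality $d(C)+d(C^\perp)=|G|+2$ and then computing in the local algebra $KP$ --- but its decisive step (that $u\hat u$ equal to a nonzero scalar is incompatible with $u$ having full support in $KP$) is asserted rather than proved, and that is exactly where the substance of the theorem lies.

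For comparison, the paper's own proof goes a completely different way: it combines Theorem 9 (for an LCD, MDS code the components of $z$ generate $G$) with Theorem 4(2), asserts that consequently every component of $z$ has order $2$, that a group generated by involutions is abelian, and then contradicts Corollary 7. That route has its own difficulties (elements of $G\setminus P$ may have order $2p^k$, and dihedral groups are generated by involutions without being abelian), so it is not a model you can simply fall back on; but the concrete defects in your proposal are the off-by-one at the Theorem 7 threshold and the unproved final incompatibility in the $KP$ computation.
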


\begin{proof}
If $KG$ contains an LCD and MDS group code $C$, $C$ is of the form $zKG$, where $z=z^2=\hat z$, and components of $z$ generate $G$, but all the components of $z$ has order $2$, and therefore group generated by them is abelian always, then by Corollary $7$, we get a contradiction.
\end{proof}

\begin{coro}
For $K$ is a field of characteristic $p$ and $G$ is finite group with $|G|=p\cdot q^m$ ($p$ and $q$ are distinct primes, $q>p)$, then $KG$ cannot contain an LCD, MDS group code.
\end{coro}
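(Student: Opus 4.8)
The plan is to derive the statement as a direct consequence of the strengthened Theorem 11, whose hypothesis only requires that the Sylow subgroups of $G$ \emph{other than} the Sylow-$p$-subgroup be unique. Since $|G| = p\cdot q^{m}$ and $p$ is the characteristic of $K$, the only prime dividing $|G|$ apart from $p$ is $q$. Thus the whole argument collapses to a single point: showing that the Sylow-$q$-subgroup of $G$ is unique. Once that is in hand, Theorem 11 applies and forces $p\nmid |G|$ for any putative LCD, MDS code, which clashes with $p\mid |G|=p\cdot q^{m}$ and yields the non-existence claim.

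First I would record the Sylow data: the Sylow-$p$-subgroup has order $p$ and the Sylow-$q$-subgroup has order $q^{m}$. By the third Sylow theorem the number $n_{q}$ of Sylow-$q$-subgroups satisfies $n_{q}\equiv 1\pmod q$ and $n_{q}\mid p$; as $p$ is prime, $n_{q}\in\{1,p\}$. Next I would invoke the hypothesis $q>p$ to discard the case $n_{q}=p$: if $n_{q}=p$ then $p\equiv 1\pmod q$, i.e. $q\mid(p-1)$, which forces $q\le p-1<p<q$, a contradiction. Hence $n_{q}=1$, so the Sylow-$q$-subgroup is unique and therefore normal in $G$.

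Finally I would assemble the proof by contradiction. Assume $KG$ contains an LCD, MDS group code $C=zKG$ with $z=z^{2}=\hat z$. Because every Sylow subgroup of $G$ distinct from the Sylow-$p$-subgroup (that is, the Sylow-$q$-subgroup) has just been shown to be unique, the hypotheses of Theorem 11 are met, and its conclusion gives $p\nmid |G|$. This contradicts $p\mid |G|=p\cdot q^{m}$, so no such $C$ can exist. I expect the only genuinely load-bearing step to be the uniqueness of the Sylow-$q$-subgroup, and there the crux is the inequality $q>p$: it is precisely what rules out $n_{q}=p$, since without $q>p$ one could have $q\mid(p-1)$ and hence multiple Sylow-$q$-subgroups, in which case Theorem 11 would no longer be directly available. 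Everything after that reduction is a verbatim appeal to Theorem 11.
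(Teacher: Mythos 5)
Your proof is correct and follows exactly the route the paper intends: the corollary is stated there without a written proof, but Example 5 (the order-$75$ case) carries out precisely your argument, namely that $n_q \mid p$ and $n_q \equiv 1 \pmod q$ together with $q>p$ force the Sylow-$q$-subgroup to be unique, after which Theorem 11 applies and contradicts $p \mid |G|$. Nothing further is needed beyond what you wrote.
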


\section{Examples}
In this section we are giving some examples in context of our theorems.
\begin{ex}
Let $G=S_3$ and $K$ be a field of characteristic $2$, since $S_3$ contains a normal subgroup having order $2k+1,(k=1)$, namely $A_3$, so it is possible to construct a self-adjoint LCD group code. We put $z=1.e+1.(123)+1.(132)$, then $z=z^2=\hat z$, and therefore $zKS_3$ forms a self-adjoint LCD group code.
\end{ex}

\begin{ex}
Let $zKG$ is a self-adjoint group code, components of $z$ generate a normal subgroup, but for every normal subgroup we may not be able to construct a self-adjoint group code by the method we have described in Theorem $3$, for example if $K=\mathbb{Z}_2$ and $G=\mathbb{Z}_8$, then $G$ contains a subgroup $\lbrace \bar0,\bar 2,\bar 4,\bar 6\rbrace$, which is isomorphic to $\mathbb{Z}_4$. Since it idex in $\mathbb{Z}_8$ is $2$ so it is normal in $\mathbb{Z}_8$.  But if we put $z=\bar 0 + \bar 2 + \bar 4 + \bar 6$, then $z$ is self adjoint but not an idempotent as $z^2=0$.
\end{ex}

However, if we have had a normal subgroup of order $pk+1$, where $p$ being the characteristic of the field $k\in N$, then we can always use the method of Theorem $3$ to construct a self adjoint group code.

\begin{ex}
Consider $G=A_4$ and $K$ be any field of characteristic $3$. Now it can be seen very easily that $K_4\triangleleft A_4$, where $K_4=\lbrace (),(12)(34),(13)(24),(14)(23)\rbrace$ is the Klien's four group. Since $K_4$ has order of the form $3k+1,(k=1)$, we can form a self adjoint LCD code by setting $z= () + (12)(34) + (13)(24)+(14)(23)$, we see that $z$ is idempotent, self adjoint, and its components form a normal subgroup in $A_4$.
\end{ex}
For MDS and LCD group code $zKG$, components of $z$ generate $G$, but the converse does not hold. The example as follows.
\begin{ex}
Consider $G=A_4$, it is a well known fact that $A_4$ generated by all three cycles. Consider $K=Z_2$, and $z$ to be the sum of all three cycles, then $z=z^2=\hat z$, then $zKG$ is an LCD group code, where components of $z$ generate $G$, but from direct verification, we see that the dimension of $zKG$ is $3$ but minimum distance $d \neq 10.$
\end{ex}

\begin{ex}
Let $G$ be group of order $75$, and $K$ is any field of characteristic $3$. We should note $5\geq 3$ so $5k+1\mid 75$ iff $k=0$, implies Sylow-5-subgroup of $G$ is unique and hence $KG$ can not contain an LCD, MDS group code no matter $G$ is abelian or not.
\end{ex}

\textbf{Remark} If $G=$ (Sylow-$5$-subgroup) $\rtimes$ (Sylow-$3$-subgroup). $G$ is not abelian.

\begin{ex}
Let $G$ be group of order $2079$, and $K$ is any field of characteristic $3$. Now $2079=3^3 \times 7 \times 11$. Here Sylow-$7$-subgroup and Sylow-$11$-subgroup are unique. Hence $KG$ can not contain an LCD, MDS group code no matter $G$ is abelian or not.
\end{ex}

\textbf{Remark} If we choose Sylow-$3$-subgroup of $G$ as $(\mathbb{Z}_3 \times \mathbb{Z}_3) \rtimes \mathbb{Z}_3$, then $G$ is non abelian.

\section{Conclusion} In this paper, we have studied LCD group code and gave an estimation of the distance of the code, also we explored the involvement of normal subgroups of $G$ with a given LCD self-adjoint  group code and further we proved the Lemma $4.4$ in \cite{CruzWolfgang} for a larger class of finite groups. We cordially invite the readers to upgrade or counter the Theorem $11$  for more larger class of finite groups.\\

\textbf{Acknowledgement}
 
The author Saikat Roy is thankful to CSIR, MHRD, India; for financial support to pursue his research work and the author Satya Bagchi thanks to DST-SERB project grant EEQ/2016/000140; for financial support.




\end{document}